\newtheorem{Theorem}{Theorem}
\newtheorem{Corollary}[Theorem]{Corollary}
\newtheorem{Proposition}[Theorem]{Proposition}
\newtheorem{Lemma}[Theorem]{Lemma}
\newtheorem{Definition}[Theorem]{Definition}
\newtheorem{Remark}[Theorem]{Remark}
\newtheorem{Property}[Theorem]{Property}
\newtheorem{Example}[Theorem]{Example}
\begin{document}

\title{Variational principles and applications to symmetric PDEs}

\author[J. Falc\'{o}]{Javier Falc\'{o}}
\address[Javier Falc\'{o}]{Departamento de An\'{a}lisis Matem\'{a}tico,
	Universidad de Valencia, Doctor Moliner 50, 46100 Burjasot (Valencia), Spain} \email{francisco.j.falco@uv.es}
\author[D. Isert]{Daniel Isert}
\address[Daniel Isert]{Departamento de An\'{a}lisis Matem\'{a}tico,
	Universidad de Valencia, Doctor Moliner 50, 46100 Burjasot (Valencia), Spain}
\email{daniel.isert@uv.es}

\thanks{The first author was supported by grant PID2021-122126NB-C33 funded by MCIN/AEI/10.13039/501100011033 and by “ERDF A way of making Europe”.}


\keywords{Group invariant, partial differential equation, fixed point, variational principle, Banach space.}
\subjclass[2020]{58E30,46B20,46B99}

\maketitle

\begin{abstract}
In this paper, we explore various equivalences of Ekeland's variational principle within the framework of group-invariant mappings. We introduce and analyze several key theorems, including the Drop theorem, the Petal theorem, Caristi-Kirk fixed-point theorem, and Takahashi's theorem, all of them within this context. Moreover, we extend the classical Drop theorem and Petal theorem to a more generalized setting. We also demonstrate the practical significance of these findings through numerous applications to diverse areas of mathematics. In particular, in the context of partial differential equations, we explore their implications on the solution of the Plateau problem, and in control theory. We also extend the classical Pontyargin maximum principle.
\end{abstract}

\tableofcontents

\section{Introduction}

Since the publication of the Ekeland's variational principle, \cite{Ekeland}, this result has proved to be one of the most powerful tools in analysis due to its wide-range of applications to functional analysis, fixed point theory, and partial differential equations, among others. Recently, the theory of group invariant mappings has gained a lot of strength inside the field of functional analysis, in its most general way, see for instance \cite{DaFaJu} and \cite{ArFaMa}. In 2024 we proved a group invariant version of the Ekeland's variational principle, see \cite{FaIs}, and studied several applications of this result.

The purpose of this paper is to deepen the study of the Ekeland's variational principle by giving some equivalences of this result, such as the Drop theorem, the Petal theorem, the Caristi-Kirk fixed point theorem, and the Takahasi's theorem, all of them in their group invariant version. Furthermore, in our study of the Drop and the Petal theorem, by using group invariant techniques, we obtain generalizations of the classical ones. In this latter results, instead of searching for a group invariant point, we search for group invariant sets on the solution, and what's more surprisingly about them, they may not be equivalent to the group invariant Ekeland's variational principle. 

As we mentioned earlier, one of the main applications of Ekeland's variational principle is to solve problems in partial differential equations (PDE). Continuing in this line, we will present here some new applications of these results, such as solving symmetric PDEs. 

We will begin this manuscript by presenting all the notation and definitions we will require throughout the paper, as well as some useful results that will be used later on. Then, we will move to the core of the paper, where we study the equivalences of the group invariant Ekeland's variational principle. In particular we present the group invariant version of the Drop theorem, the Petal theorem, the Caristi-Kirk fixed point theorem, and the Takahasi's theorem. To conclude we will present applications in the fields of geometric analysis and PDEs.

\section{Preliminaries}

Let us begin by establishing some notation. We will denote a metric space by $(M, d)$, a normed space by $(X, \Vert \cdot \Vert)$, and the space of linear and continuous mappings from $X$ to $X$ by $\mathcal{L}(X)$. Hereinafter, $G \subseteq \mathcal{L}(X)$ is reserved to denote a topological group of invertible bounded linear isometries with the relative topology of $\mathcal{L}(X)$. Also, $g$ is reserved to denote an element of the group $G$.

To start, let us present the three notions of $G$-invariance. Let $X$, $Y$ be two normed spaces and let $G$ be a topological group. We say that:
\begin{enumerate}
\item A point $x \in X$ is $G$-invariant, or invariant under the action of $G$ if $g(x) = x$ for all $g \in G$.

\item A set $K \subset X$ is $G$-invariant if for every $g \in G$, $g(K) = K$.

\item A mapping $f \colon X \to Y$ is $G$-invariant if for every $x \in X$ and every $g \in G$ we have that
\[
f(g(x)) = f(x).
\]
\end{enumerate}

In particular, a metric $d \colon X \times X \to [0, +\infty[$ is $G$-invariant if
\[
d(g(x), g(y)) = d(x, y) \quad \forall \, x, y \in X. 
\]

Note that, if the distance is translation invariant, this is equivalent to assume that $d(x,0)=d(g(x),0)$ for all $x\in X$ and all $g\in G$.

We will always assume that the distance of the metric spaces is $G$-invariant.

As usual, we will denote by $X_{G}$, $B_{X}$, $S_{X}$ the set of all group invariant points of $X$, the open unit ball of $X$, and the unit sphere of $X$ respectively. We will always assume that the norm of the space $X$, $\Vert \cdot \Vert$, is $G$-invariant.

A fundamental definition of this paper is that of $G$-symmetrization point. Let us recall here what we understand by $G$-symmetric point. Let $X$ be a Banach space and $G$ be a compact topological group acting on $X$. If $x \in X$ we define the symmetrization point of $x$ with respect to $G$, or the $G$-symmetrization point, as
\[
\overline{x} = \int_{G}g(x)d\mu(g),
\] 
where $\mu$ is the Haar measure and the integral is the Bochner integral.

In \cite[Theorem 12]{FaIs} we saw that Ekeland's $G$-invariant version only holds for convex functions with respect to the group $G$. Recall that if $X$ is a Banach space and $G$ be a compact topological group acting on $X$, a function $\varphi \colon X \to \mathbb{R}$ is convex with respect to $G$ given that 
\[
\varphi\left(\int_{G}g(x) d\mu(g)\right) \leq \int_{G}\varphi(g(x))d\mu(g) \quad \forall \, x \in X.
\]
And we say that $\varphi$ is linear with respect to $G$ given that
\[
\varphi\left(\int_{G}g(x) d\mu(g)\right) = \int_{G}\varphi(g(x))d\mu(g) \quad \forall \, x \in X.
\]
Then, a metric $d \colon X \times X \to [0, +\infty[$ is convex with respect to $G$ if
\[
d(\overline{x}, \overline{y}) \leq \int_{G}d(g(x), g(y))d\mu(g).
\]
\begin{Remark}
    Observe that the definition of a metric $d$ being convex with respect to the group is quite natural. Observe that, if $d$ were to be invariant by translations, then this is equivalent to the fact that the norm associated with the metric $d$ is convex with respect to $G$, which is always true. Indeed,
    \[
    d(\overline{x} - \overline{y}, 0) = d(\overline{x}, \overline{y}) \leq \int_{G}d(g(x), g(y))d\mu(g) = \int_{G}d(g(x - y), 0)d\mu(g),
    \]
    hence
    \[
    \Vert \overline{x} - \overline{y} \Vert = d(\overline{x} - \overline{y}, 0) \leq \int_{G}\Vert g(x - y) \Vert d\mu(g) = \Vert x - y \Vert.
    \] 
\end{Remark}

In section \ref{Equivalència de la gota} we are going to work with the petal of two points, and the drop of a set. Let us present here these two objects, and study some interesting $G$-invariant properties related to them. Let $(M, d)$ be a metric space, $a, b \in M$ and $\gamma > 0$, the petal of $a$ and $b$ is defined as
\[
P_{\gamma}(a, b) = \left\{x \in M ~~ | ~~ \gamma d(a, x) + d(b, x) \leq d(a, b)\right\}.
\]
Let $(X, \Vert \cdot \Vert)$ be a normed vector space, $a \in X$ and $B \subset X$ be a convex subset. The drop of $a$ and $B$ is defined as
\[
D(a, B) = \left\{a + t(b - a) ~~ | ~~ t \in [0, 1], \, b \in B\right\}.
\]

Let us observe the following properties of the drop and the petal when we consider a group $G$. The proof of these results is a direct consequence of the $G$-invariance of the distance, but we include them here for the sake of completeness.
\begin{Proposition}
    Let $(M, d)$ be a metric space and $G \subseteq \mathcal{L}(M)$ be a topological group of isometries acting on $M$. Let $\gamma > 0$, then
    \[
    g(P_{\gamma}(a, b)) = P_{\gamma}(g(a), g(b)) \quad \forall \, a, b \in M, \hbox{ and } \forall \, g \in G.
    \]
\end{Proposition}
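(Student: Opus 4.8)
The plan is to prove the set equality by double inclusion, relying on the fact that each $g \in G$ is a bijective isometry, so that the distance is $G$-invariant: $d(g(u), g(v)) = d(u, v)$ for all $u, v \in M$. The entire argument amounts to rewriting the defining inequality of the petal under the action of $g$, and the only subtlety is keeping track of where isometry is used versus where invertibility (surjectivity) is needed.

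First I would fix $a, b \in M$ and $g \in G$ and establish the inclusion $g(P_{\gamma}(a,b)) \subseteq P_{\gamma}(g(a), g(b))$. To this end, take an arbitrary $y \in g(P_{\gamma}(a,b))$, so that $y = g(x)$ for some $x$ satisfying $\gamma d(a, x) + d(b, x) \leq d(a, b)$. Applying the $G$-invariance of the metric to each term gives $d(g(a), y) = d(g(a), g(x)) = d(a, x)$, and likewise $d(g(b), y) = d(b, x)$ and $d(g(a), g(b)) = d(a, b)$. Substituting these identities into the defining inequality yields $\gamma d(g(a), y) + d(g(b), y) \leq d(g(a), g(b))$, which is precisely the condition $y \in P_{\gamma}(g(a), g(b))$.

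For the reverse inclusion $P_{\gamma}(g(a), g(b)) \subseteq g(P_{\gamma}(a,b))$, I would exploit that $g$ is invertible (an element of a group of invertible isometries). Given $y \in P_{\gamma}(g(a), g(b))$, set $x = g^{-1}(y)$, so that $y = g(x)$; applying the same invariance identities in reverse shows that $x$ satisfies $\gamma d(a, x) + d(b, x) \leq d(a, b)$, hence $x \in P_{\gamma}(a,b)$ and therefore $y = g(x) \in g(P_{\gamma}(a,b))$. Combining the two inclusions gives the claimed equality.

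There is no genuine obstacle here: the statement is a direct consequence of $G$-invariance of the distance, exactly as the paper remarks. The only point requiring a moment's care is that isometry alone delivers one inclusion, while the equality of sets (rather than mere containment) genuinely needs the surjectivity of $g$, which is guaranteed because $G$ consists of invertible maps; I would make sure this is used explicitly in the reverse inclusion.
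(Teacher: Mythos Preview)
Your proof is correct and follows essentially the same approach as the paper: a double inclusion argument using the $G$-invariance of the metric for one direction and the invertibility of $g$ for the other. Your presentation is in fact slightly cleaner, since you explicitly isolate where surjectivity of $g$ is needed, whereas the paper's write-up invokes compactness of $G$ unnecessarily and phrases the reverse inclusion through an ``if and only if'' chain.
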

\begin{proof}
Let $x \in P_{\gamma}(a, b)$. Observe that
\[
d(g(a), g(b)) \geq \gamma d(g(a), g(x)) + d(g(x), g(b)) \quad \forall \, g \in G.
\]
Since $G$ is compact, and $d$ is $G$-invariant, this is equivalent to
\[
d(g^{-1}(g(a)), g^{-1}(g(b))) \geq \gamma d(g^{-1}(g(a)), g^{-1}(g(x))) + d(g^{-1}(g(x)), g^{-1}(g(b))).
\]
But observe that this is the same as
\[
d(a, b) \geq \gamma d(a, x) + d(x, b),
\]
so $g(x) \in P_{\gamma}(g(a), g(b))$. This shows that $g(P_{\gamma}(a, b)) \subseteq P_{\gamma}(g(a), g(b))$.

Take now $x \in P_{\gamma}(g(a), g(b))$, we want to see that $g^{-1}(x) \in P_{\gamma}(a, b)$. Notice that
\[
d(a, b) \geq \gamma d(a, g^{-1}(x)) + d(b, g^{-1}(x)),
\]
if, and only if,
\[
d(g(a), g(b)) \geq \gamma d(g(a), g(g^{-1}(x))) + d(g(b), g(g^{-1}(x))),
\]
by the $G$-invariance of $d$. But this last inequality reads as follows:
\[
d(g(a), g(b)) \geq \gamma d(g(a), x) + d(g(b), x).
\]
This shows the other inclusion and concludes the proof.
\end{proof}

\begin{Proposition}
    Let $(X, \Vert \cdot \Vert)$ be a normed vector space, $G \subseteq \mathcal{L}(X)$ be a topological group of isometries acting on $X$, $a \in X$ and $B \subset X$ be a convex subset. Then:
    \[
    g(D(a, B)) = D(g(a), g(B)).
    \]
\end{Proposition}
\begin{proof}
    Let $x \in D(x_{0}, B) = \text{conv}(x_{0}, B)$. Then, for some $b \in B$, and $t > 0$:
    \[
    x = tx_{0} + (1-t)b.
    \]
    Now, by the linearity of $g$ we know that
    \[
    g(x) = tg(x_{0}) + (1-t)g(b) \in \text{conv}(g(x_{0}), g(B)) = D(g(x_{0}), g(B)).
    \]
    This shows that $g(D(a, B)) \subseteq D(g(a), g(B))$.
    
    Now take $x \in D(g(x_{0}), g(B)) = \text{conv}(g(x_{0}), g(B))$, then for some $z \in g(B)$, and for $t > 0$ it is clear that
    \[
    x = tg(x_{0}) + (1-t)z.
    \]
    Since $z \in g(B)$, there exists $b \in B$ such that $z = g(b)$. Then, by linearity of $g$,
    \[
    x = g(tx_{0} + (1-t)b) \in g(\text{conv}(x_{0}, B)) = g(D(x_{0}, B)).
    \]
    This concludes the proof.
\end{proof}

The following $G$-invariant properties answer some of the natural questions that rise in our scenario of $G$-invariant mappings. The proof of this results is straightforward.
\begin{Property}
    Let $(M, d)$ be a metric space and $G \subseteq \mathcal{L}(M)$ be a topological group of isometries acting on $M$. Then, $a, b \in M$ are $G$-invariant if and only if $P_{\gamma}(a, b)$ is $G$-invariant for every $\gamma > 0$.
\end{Property}

\begin{Proposition}
    Let $(X, \Vert \cdot \Vert)$ be a metric space, $G \subseteq \mathcal{L}(X)$ be a topological group of isometries acting on $X$, and $A, B \subseteq X$ be two $G$-invariant subsets. Then $\text{conv}(A, B)$ is $G$-invariant.
\end{Proposition}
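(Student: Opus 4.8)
The plan is to establish the two inclusions $g(\text{conv}(A, B)) \subseteq \text{conv}(A, B)$ and $\text{conv}(A, B) \subseteq g(\text{conv}(A, B))$ for every $g \in G$, which together express the $G$-invariance $g(\text{conv}(A, B)) = \text{conv}(A, B)$. The entire argument rests on a single elementary fact: since each $g \in G$ is a linear map, it commutes with the formation of convex combinations, so that $g(\text{conv}(S)) = \text{conv}(g(S))$ for any subset $S \subseteq X$.

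For the first inclusion, I would take an arbitrary $x \in \text{conv}(A, B) = \text{conv}(A \cup B)$ and write it as a finite convex combination $x = \sum_{i=1}^{n} t_i z_i$ with $z_i \in A \cup B$, $t_i \geq 0$ and $\sum_{i=1}^{n} t_i = 1$. Applying $g$ and using its linearity gives $g(x) = \sum_{i=1}^{n} t_i g(z_i)$. Because $A$ and $B$ are $G$-invariant we have $g(A) = A$ and $g(B) = B$, hence $g(A \cup B) = g(A) \cup g(B) = A \cup B$, so each $g(z_i)$ again lies in $A \cup B$. Therefore $g(x)$ is once more a convex combination of points of $A \cup B$, that is, $g(x) \in \text{conv}(A, B)$.

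For the reverse inclusion I would repeat the previous step with $g^{-1}$ in place of $g$, which is legitimate since $G$ is a group and hence $g^{-1} \in G$. This yields $g^{-1}(\text{conv}(A, B)) \subseteq \text{conv}(A, B)$, and applying $g$ to both sides gives $\text{conv}(A, B) \subseteq g(\text{conv}(A, B))$. Combining the two inclusions closes the proof.

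There is no genuine obstacle here: the statement is a direct consequence of the linearity of the elements of $G$ together with the $G$-invariance of $A$ and $B$, as anticipated by the remark preceding the proposition. The only point worth attention is to invoke the group structure (to guarantee $g^{-1} \in G$) when passing from one inclusion to the other, rather than attempting to prove surjectivity of $g$ on $\text{conv}(A, B)$ directly.
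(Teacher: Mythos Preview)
Your argument is correct and is exactly the straightforward verification the paper has in mind: the paper does not spell out a proof of this proposition but simply declares it straightforward, and your use of the linearity of each $g$ to push it through finite convex combinations, together with the group property to obtain the reverse inclusion via $g^{-1}$, is the natural route.
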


As a consequence of this result, we obtain the following.
\begin{Corollary}
    Let $(X, \Vert \cdot \Vert)$ be a normed space and $G \subseteq \mathcal{L}(X)$ be a topological group of isometries acting on $X$. Assume that $x_{0} \in X$, $B \subset X$ is a convex subset, and that both are $G$-invariant. Then $D(x_{0}, B)$ is $G$-invariant.
\end{Corollary}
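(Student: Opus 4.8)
The plan is to reduce the statement to the immediately preceding Proposition, which asserts that the convex hull of two $G$-invariant subsets is again $G$-invariant. The starting observation is that the drop is nothing but such a convex hull: directly from the definition,
\[
D(x_{0}, B) = \left\{x_{0} + t(b - x_{0}) ~~ | ~~ t \in [0, 1],\, b \in B\right\} = \text{conv}(\{x_{0}\}, B).
\]
Thus, once I verify that both $\{x_{0}\}$ and $B$ are $G$-invariant \emph{sets}, the Proposition applies verbatim and the conclusion follows.

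The set $B$ is $G$-invariant by hypothesis, so the only thing left to check is that the singleton $\{x_{0}\}$ is a $G$-invariant set. Here I would invoke the hypothesis that $x_{0}$ is a $G$-invariant \emph{point}, that is, $g(x_{0}) = x_{0}$ for every $g \in G$. Consequently $g(\{x_{0}\}) = \{g(x_{0})\} = \{x_{0}\}$ for all $g \in G$, which is precisely the definition of $\{x_{0}\}$ being a $G$-invariant set. This is the step that bridges the two different notions of $G$-invariance in play (invariance of a point versus invariance of a set), and it is the only place where the hypothesis on $x_{0}$ enters.

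With both $\{x_{0}\}$ and $B$ established as $G$-invariant sets, the preceding Proposition immediately gives that $\text{conv}(\{x_{0}\}, B) = D(x_{0}, B)$ is $G$-invariant, which finishes the argument. As a self-contained alternative, one may instead appeal to the Proposition stating $g(D(a, B)) = D(g(a), g(B))$: specializing to $a = x_{0}$ and using $g(x_{0}) = x_{0}$ together with $g(B) = B$ yields $g(D(x_{0}, B)) = D(x_{0}, B)$ for every $g \in G$, which is exactly $G$-invariance of the drop. Since no genuine obstacle arises, the essential (and only) insight is to recognize the drop as a convex hull so that the earlier results become applicable.
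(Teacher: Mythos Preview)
Your proposal is correct and matches the paper's approach: the paper presents this Corollary explicitly as a consequence of the preceding Proposition on $\text{conv}(A,B)$ and gives no further proof, and your argument fills in exactly the intended step (identifying $D(x_{0},B)=\text{conv}(\{x_{0}\},B)$ and noting that $\{x_{0}\}$ is a $G$-invariant set). The alternative you mention via $g(D(a,B))=D(g(a),g(B))$ is also valid and equally direct.
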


We are going to show now that the previous conditions are indeed required. Through the following examples we will be working with the group $G = \left\{Id, \sigma\right\} \subseteq \mathbb{R}^{2}$, where $\sigma \colon \mathbb{R}^{2} \to \mathbb{R}^{2}$ is defined as $\sigma(x, y) = (y, x)$.
\begin{Example}
In $\mathbb{R}^{2}$ take $b = (0, 0)$, which is clearly $G$-invariant, and $a = (2,0)$, which is not $G$-invariant. Then, the petal $P_{\gamma}(a, b)$ is not $G$-invariant. Also, take $B = B((0, 3), 1)$ the open ball centered at $(0, 3)$ with radius $1$, which is clearly not $G$-invariant. Then, the drop $D(b, B)$ is not $G$-invariant.
\end{Example}

However if we consider a non $G$-invariant point $a$, we cannot say anything about the drop.
\begin{Example}
In $\mathbb{R}^{2}$ suppose $x = (0, 1)$ which is not $G$-invariant, and take $B = B((0, 0), 2)$ the ball centered at $(0, 0)$ with radius $2$, which is clearly $G$-invariant, by $G$-invariance of the norm. Then $D(x_{0}, B) = B = B((0, 0), 2)$ which is $G$-invariant as before. Now, suppose $x = (5, 0)$ which is not $G$-invariant, and take $B = B((0, 0), 2)$ the ball centered at $(0, 0)$ with radius $2$, which is clearly $G$-invariant. Since $g(x_{0}) = (0, 5)$, we can see that $x_{0} \in D(x_{0}, B)$, but $x_{0} \notin D(g(x_{0}), B)$. So, $D(x_{0}, B)$ is not $G$-invariant.
\end{Example}

Through the paper, we will need to perform some operations on group invariant sets. These operations are compatible with the $G$-invariance of the sets. We would like to conclude this section, by recalling some of these results. The proofs of these results are straightforward.
\begin{Property}
    Let $(M, d)$ be a metric space and $G \subseteq \mathcal{L}(M)$ be a topological group of isometries acting on $M$. If $A, B \subseteq M$ are two $G$-invariant subsets, then $A \cap B$ is again $G$-invariant and $(A \cap B)_{G} = A_{G} \cap B_{G}$.
\end{Property}

\begin{Property}
    Let $X$ be a Banach space and $G \subseteq \mathcal{L}(X)$ be a topological group of isometries acting on $X$. If $A, B \subseteq X$ are two $G$-invariant subsets, then $A \backslash B$ is again $G$-invariant and $(A \backslash B)_{G} = A_{G} \backslash B_{G}$.
\end{Property}

\section{Equivalence of the Drop theorem, Petal theorem, and Ekeland's variational principle}\label{Equivalència de la gota}

The purpose of this section is to present a proof of the equivalence of the group invariant version of the Drop theorem (Theorem \ref{Drop theorem}), the Petal theorem (Theorem \ref{Petal theorem}), and the Ekeland's variational principle (Theorem \ref{Ekeland's altered}). To prove these equivalences, we are going to use the following version of the Ekeland's variational principle, based on \cite[Theorem 12]{FaIs}. 

Observe that \cite[Theorem 12]{FaIs} is only stated for normed spaces, but a slight revision of the proof shows us the following metric version of the Theorem.

\begin{Theorem}\label{Ekeland's altered}
    Let $(M, d)$ be a complete metric space and $G \subseteq \mathcal{L}(M)$ be a compact topological group of isometries acting on $M$. Suppose $d$ is $G$-invariant, and let $f \colon M \to \mathbb{R}\cup\left\{+\infty\right\}$ be a mapping that is lower semiconinuous, bounded below, proper, $G$-invariant and convex with respect to $G$. Then for all $\gamma > 0$ and every $G$-invariant point $x_{0} \in M$, there exists a $G$-invariant point $a \in M$ such that
    \begin{eqnarray}
    f(a) < f(x) + \gamma d(x, a) & \forall \,  x \in M, \, x \neq a,\label{Uno de Ekeland} \\
    f(a) \leq f(x_{0}) - \gamma d(a, x_{0}). & \label{Dos de Ekeland}
    \end{eqnarray}
\end{Theorem}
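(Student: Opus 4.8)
The plan is to adapt the classical proof of Ekeland's variational principle — an iterative construction of a Cauchy sequence using an ordering induced by $f$ — while ensuring at every stage that the points produced are $G$-invariant. The key new difficulty, compared to the classical setting, is the $G$-invariance of the limit point $a$; this is where the convexity of $f$ with respect to $G$ and the convexity of the metric $d$ with respect to $G$ must enter.

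First I would define, for each $x \in M$, the sublevel-type set
\[
S(x) = \left\{y \in M \mid f(y) + \gamma d(x, y) \leq f(x)\right\},
\]
and I would check that whenever $x$ is $G$-invariant, the set $S(x)$ is $G$-invariant and closed. $G$-invariance follows from the $G$-invariance of both $f$ and $d$: if $y \in S(x)$ and $x = g(x)$, then
\[
f(g(y)) + \gamma d(x, g(y)) = f(y) + \gamma d(g(x), g(y)) = f(y) + \gamma d(x, y) \leq f(x),
\]
so $g(y) \in S(x)$. Closedness follows from lower semicontinuity of $f$ and continuity of $d$. I would then build a sequence $(x_n)$ with $x_0$ the given invariant point: given $x_n$ invariant, I choose $x_{n+1} \in S(x_n)$ with $f(x_{n+1})$ close (within $1/2$ of the infimum of $f$ over $S(x_n)$, say) to $\inf_{S(x_n)} f$. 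The nested sets $S(x_0) \supseteq S(x_1) \supseteq \cdots$ and the estimate $\gamma d(x_n, x_{n+1}) \leq f(x_n) - f(x_{n+1})$, together with monotonicity and boundedness below of $f(x_n)$, give that $(x_n)$ is Cauchy; completeness yields a limit $a$, and an intersection argument shows $\{a\} = \bigcap_n S(x_n)$, from which (\ref{Uno de Ekeland}) and (\ref{Dos de Ekeland}) follow by standard estimates.

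The main obstacle is ensuring the limit $a$ is $G$-invariant, since the naive iterate $x_{n+1}$ need not itself be invariant even though $S(x_n)$ is. The fix is to symmetrize: at each step, after choosing a near-minimizer, I replace it by its $G$-symmetrization point $\overline{x_{n+1}} = \int_G g(x_{n+1})\,d\mu(g)$. I must verify that this symmetrized point still lies in $S(x_n)$ and does not increase $f$; this is exactly where convexity with respect to $G$ is used. Indeed, since $x_n$ is invariant,
\[
f(\overline{x_{n+1}}) \leq \int_G f(g(x_{n+1}))\,d\mu(g) = f(x_{n+1}),
\]
using $G$-convexity and then $G$-invariance of $f$; and the $G$-convexity of $d$ gives $d(x_n, \overline{x_{n+1}}) = d(\overline{x_n}, \overline{x_{n+1}}) \leq \int_G d(g(x_n), g(x_{n+1}))\,d\mu(g) = d(x_n, x_{n+1})$, so that $\overline{x_{n+1}} \in S(x_n)$ as required. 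Thus each iterate can be taken $G$-invariant, the set $X_G$ of invariant points is closed (being an intersection of kernels of continuous maps $g - \mathrm{Id}$), and the Cauchy limit $a$ lands in $X_G$.

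Finally I would assemble the two conclusions. Inequality (\ref{Dos de Ekeland}) is immediate from $a \in S(x_0)$. For (\ref{Uno de Ekeland}), I would argue that $a$ is the unique point of $\bigcap_n S(x_n)$: if some $x \neq a$ satisfied $f(x) + \gamma d(x, a) \leq f(a)$, then $x \in S(a) \subseteq S(x_n)$ for all $n$, and the shrinking-diameter estimate $\operatorname{diam} S(x_n) \to 0$ forces $x = a$, a contradiction. The quantitative control $\operatorname{diam} S(x_n) \to 0$ comes from the near-minimizing choice of $f(x_{n+1})$ over $S(x_n)$ together with the defining inequality of $S$, exactly as in the classical argument. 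I expect the bookkeeping of these estimates to be routine; the genuinely new content is the symmetrization step and its compatibility with the set $S(x_n)$, which I would present as a standalone claim before running the iteration.
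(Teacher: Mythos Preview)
Your proof is correct, but it takes a genuinely different route from the paper's. The paper gives essentially a two-line argument: it restricts $f$ to the closed $G$-invariant set $S = \{x \in M : f(x) + \gamma d(x, x_{0}) \leq f(x_{0})\}$ and invokes the $G$-invariant Ekeland principle already established in \cite[Theorem~12]{FaIs}, which directly yields a $G$-invariant $a \in S$ satisfying \eqref{Uno de Ekeland}; membership $a \in S$ then gives \eqref{Dos de Ekeland} for free. Your approach instead reconstructs the whole argument from scratch --- the classical nested-sets iteration $S(x_{0}) \supseteq S(x_{1}) \supseteq \cdots$, with a symmetrization step inserted at each stage to keep every iterate $G$-invariant. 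That is essentially what a proof of the cited Theorem~12 would look like (with the restriction to $S(x_{0})$ built in), so you are inlining a black box that the paper simply quotes; the payoff is self-containment, at the cost of length.

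One caveat worth flagging: your symmetrization step uses convexity of $d$ with respect to $G$ to conclude $d(x_{n}, \overline{x_{n+1}}) \leq d(x_{n}, x_{n+1})$, but Theorem~\ref{Ekeland's altered} as stated assumes only that $d$ is $G$-invariant. In a normed space this convexity is automatic (cf.\ the Remark following the definition), and the result being quoted from \cite{FaIs} is originally for normed spaces, so the discrepancy is likely cosmetic; but note that the Petal theorem immediately afterward does list convexity of $d$ with respect to $G$ among its hypotheses, so you are being more explicit than the paper about what is actually needed.
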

\begin{proof}
Consider the subspace $S = \left\{x \in M ~ | ~ f(x) + \gamma d(x, x_{0}) \leq f(x_{0})\right\}$, which is closed, and in \cite[Theorem 12]{FaIs} take $f_{|_{S}}$ which satisfies all the conditions of the Theorem. Then, we know that there exists a $G$-invariant point $a \in S$ such that
\[
f(a) < f(x) + \gamma d(a, x) \quad \forall \, a \neq x \in M.
\]
But, since $a \in S$ we have that
\[
f(a) \leq f(x_{0}) - \gamma d(a, x_{0}).
\]
\end{proof}

To continue, let us present the $G$-invariant version of the Petal theorem and Drop theorem. 
\begin{Theorem}[$G$-invariant Petal theorem]\label{Petal theorem}
    Let $(M, d)$ be a metric space, $G \subseteq \mathcal{L}(M)$ be a compact topological group of isometries acting on $M$, and $C \subset M$ a complete $G$-invariant subset. Assume that $d$ is $G$-invariant and convex with respect to $G$. Let $x_{0} \in C_{G}$, $b \in (M \backslash C)_{G}$, $r \leq d(b, C)$ and $s = d(b, x_{0})$. Then, for all $\gamma > 0$, there exists a $G$-invariant point $a \in C \cap P_{\gamma}(x_{0}, b)$ such that $C \cap P_{\gamma}(a, b) = \left\{a\right\}$.
\end{Theorem}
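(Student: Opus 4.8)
The plan is to deduce the theorem directly from the metric $G$-invariant Ekeland principle (Theorem \ref{Ekeland's altered}), applied to the distance-to-$b$ function on $C$. Concretely, I would work on the complete $G$-invariant space $C$ (on which $G$ restricts to act, since $C$ is $G$-invariant) and set
\[
f \colon C \to \mathbb{R}, \qquad f(x) = d(b, x).
\]
The entire statement should then fall out of the two conclusions \eqref{Uno de Ekeland} and \eqref{Dos de Ekeland} of Ekeland's principle for this particular $f$, once they are rewritten in terms of petals. The quantities $r$ and $s$ play no role in obtaining the conclusion as stated.

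Before invoking Theorem \ref{Ekeland's altered}, I would verify that $f$ meets its hypotheses. Continuity (hence lower semicontinuity), boundedness below by $0$, and properness are immediate. The two points that genuinely use the structure are $G$-invariance and convexity with respect to $G$, and both rely on $b \in (M\backslash C)_{G}$, i.e.\ $g(b)=b$ for all $g \in G$. For invariance, $f(g(x)) = d(b, g(x)) = d(g(b), g(x)) = d(b, x) = f(x)$, using $g(b)=b$ and the $G$-invariance of $d$. For convexity with respect to $G$, since $\overline{b}=b$,
\[
f(\overline{x}) = d(b, \overline{x}) = d(\overline{b}, \overline{x}) \leq \int_{G} d(g(b), g(x))\, d\mu(g) = \int_{G} d(b, g(x))\, d\mu(g) = \int_{G} f(g(x))\, d\mu(g),
\]
where the inequality is exactly the convexity of $d$ with respect to $G$. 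I expect this verification — in particular pinning down that the $G$-convexity of $f$ is inherited from that of $d$ precisely because $b$ is a fixed point of $G$ — to be the main (if short) obstacle, as it is the only place where the nonstandard hypotheses on $d$ and on $b$ are indispensable.

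With the hypotheses in hand, Theorem \ref{Ekeland's altered} with starting point $x_{0} \in C_{G}$ and parameter $\gamma$ yields a $G$-invariant point $a \in C$ satisfying \eqref{Uno de Ekeland} and \eqref{Dos de Ekeland}, and it remains only to translate these into petal language. Inequality \eqref{Dos de Ekeland} reads $d(b,a) \leq d(b,x_{0}) - \gamma d(a,x_{0})$, that is $\gamma d(x_{0}, a) + d(b, a) \leq d(x_{0}, b)$, which is exactly $a \in P_{\gamma}(x_{0}, b)$; together with $a \in C$ this gives $a \in C \cap P_{\gamma}(x_{0}, b)$. Inequality \eqref{Uno de Ekeland} says that for every $x \in C$ with $x \neq a$ one has $d(b,a) < d(b,x) + \gamma d(x,a)$, i.e.\ $\gamma d(a,x) + d(b,x) > d(a,b)$, so that $x \notin P_{\gamma}(a,b)$. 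Since trivially $a \in P_{\gamma}(a,b)$, this forces $C \cap P_{\gamma}(a,b) = \{a\}$, which completes the argument.
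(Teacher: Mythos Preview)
Your proposal is correct and follows essentially the same route as the paper: apply the $G$-invariant Ekeland principle to $f(x)=d(b,x)$ and read off the two petal conditions from \eqref{Uno de Ekeland} and \eqref{Dos de Ekeland}. If anything, your write-up is more careful than the paper's own proof: you work explicitly on the complete $G$-invariant subspace $C$ (the paper's proof is slightly sloppy on this point and writes $M$), and you spell out why $f$ inherits $G$-invariance and $G$-convexity from $d$ via the $G$-invariance of $b$, whereas the paper just asserts it.
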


\begin{Theorem}[$G$-invariant Drop theorem]\label{Drop theorem}
    Let $(X, \Vert \cdot \Vert)$ be a normed space, $G \subseteq \mathcal{L}(X)$ be a compact topological group of isometries acting on $X$, and $C \subset X$ a complete $G$-invariant subset. Assume that $x_{0} \in C_{G}$, and $B = \overline{B(b, r)}$, where $b \in X_{G}$ and $r < d(b, C)$. Then there exists a $G$-invariant point $a \in C \cap D(x_{0}, B)$ such that $C \cap D(a, B) = \left\{a\right\}$.
\end{Theorem}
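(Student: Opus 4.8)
The plan is to deduce the statement from the $G$-invariant Petal theorem (Theorem \ref{Petal theorem}), following the classical equivalence between the drop and petal theorems but paying close attention to the \emph{location} of the point produced. The crucial geometric input is the elementary containment
\[
D(a, B) \subseteq P_{\gamma(a)}(a, b), \qquad \gamma(a) := \frac{\Vert a - b\Vert - r}{\Vert a - b\Vert + r},
\]
valid whenever $B = \overline{B(b, r)}$ and $\Vert a - b\Vert > r$. I would prove it by noting that $\psi(x) := \gamma(a)\Vert x - a\Vert + \Vert x - b\Vert$ is convex, that $\psi(a) = \Vert a - b\Vert$, and that $\psi(q) \le (\gamma(a) + 1)\Vert q - b\Vert + \gamma(a)\Vert a - b\Vert \le \Vert a - b\Vert$ for every $q \in B$, precisely by the choice of $\gamma(a)$. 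Since $P_{\gamma(a)}(a, b)$ is the sublevel set $\{\psi \le \Vert a - b\Vert\}$, hence convex, and contains $\{a\} \cup B$, it contains $\mathrm{conv}(\{a\} \cup B) = D(a, B)$. Because $t \mapsto \tfrac{t - r}{t + r}$ is increasing, this also gives $\gamma(a) \ge \gamma_0 := \tfrac{d(b, C) - r}{d(b, C) + r} > 0$ for every $a \in C$, whence $P_{\gamma(a)}(a, b) \subseteq P_{\gamma_0}(a, b)$.

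The main difficulty is that the petal theorem only places its output inside a petal $P_{\gamma_0}(x_0, b)$, which is strictly larger than the drop $D(x_0, B)$, so membership in the petal does not give membership in the drop. I would circumvent this by applying the petal theorem not to $C$ but to the restricted set $C' := C \cap D(x_0, B)$. This set is $G$-invariant: the closed ball $B$ is $G$-invariant because $b \in X_G$ and $G$ consists of isometries, so $D(x_0, B)$ is $G$-invariant by the Corollary above, and its intersection with the $G$-invariant set $C$ is $G$-invariant by the corresponding Property. Moreover $C'$ is complete, since $D(x_0, B)$ is closed in $X$ (if $(1 - t_n)x_0 + t_n q_n \to y$ with $t_n \in [0,1]$ and $q_n \in B$, pass to a subsequence with $t_n \to t$ and, according to whether $t > 0$ or $t = 0$, solve for the limiting $q$ or conclude $y = x_0$), so $C'$ is closed in the complete space $C$. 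Finally $x_0 \in C'_G$, $b \in (X \setminus C')_G$, and $r < d(b, C) \le d(b, C')$, so all hypotheses of Theorem \ref{Petal theorem} hold for $C'$ with the parameter $\gamma_0$.

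Applying the $G$-invariant Petal theorem to $C'$ with $\gamma = \gamma_0$ produces a $G$-invariant point $a \in C' \cap P_{\gamma_0}(x_0, b)$ satisfying $C' \cap P_{\gamma_0}(a, b) = \{a\}$. Since $a \in C' \subseteq D(x_0, B)$ and $a \in C$, the required location $a \in C \cap D(x_0, B)$ is immediate. To finish I would verify $C \cap D(a, B) = \{a\}$. Let $x \in C \cap D(a, B)$ with $x \neq a$. Because $a \in D(x_0, B)$ and $B$ is convex, the nesting $D(a, B) = \mathrm{conv}(\{a\} \cup B) \subseteq \mathrm{conv}(\{x_0\} \cup B) = D(x_0, B)$ holds, so $x \in C \cap D(x_0, B) = C'$. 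On the other hand, by the containment lemma and the monotonicity above, $x \in D(a, B) \subseteq P_{\gamma(a)}(a, b) \subseteq P_{\gamma_0}(a, b)$. Hence $x \in C' \cap P_{\gamma_0}(a, b) = \{a\}$, forcing $x = a$, a contradiction; therefore $C \cap D(a, B) = \{a\}$. The only genuinely delicate point is the passage to $C'$: restricting the ambient set is exactly what upgrades the petal theorem's localization into the sharper drop localization demanded by the statement, while the nesting $D(a, B) \subseteq D(x_0, B)$ is what pulls every competitor of $a$ back into $C'$, where the petal conclusion can be invoked.
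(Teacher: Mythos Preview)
Your proof is correct and follows essentially the same approach as the paper: you restrict to $C' = C \cap D(x_0, B)$, apply the $G$-invariant Petal theorem there with $\gamma_0 = \tfrac{d(b,C)-r}{d(b,C)+r}$, and then use the containment $D(a,B) \subseteq P_{\gamma_0}(a,b)$ together with the nesting $D(a,B) \subseteq D(x_0,B)$ to conclude. The paper's proof is terser (it omits the verification that $C'$ is complete and $G$-invariant and does not spell out the proof of the petal--drop containment), but the logical skeleton is identical.
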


Let us now present the main goal of this manuscript, that is, to show that the previous three theorems are equivalent.
\begin{proof}[Theorem \ref{Ekeland's altered} $\Rightarrow$ Theorem \ref{Petal theorem}]
Define the function
\[
\begin{array}{cccc}
    f \colon & M & \to & \mathbb{R} \\
     & x & \mapsto & d(x, b),
\end{array}
\]
which is continuous, bounded below by $r$ and $G$-invariant. Note that since $d$ is convex with respect to $G$, so is $f$.


Applying now Ekeland's variational principle we know that there exists $a \in S_{G}$ such that
\begin{align}
    f(a) < f(x) + \gamma d(a, x) & \quad \forall \, x \in M, \hspace{0.1cm} x \neq a \label{6 de Ekeland} \\
    f(a) < f(x_{0}) - \gamma d(a, x_{0}). \label{7 de Ekeland} & 
\end{align}
By \eqref{6 de Ekeland} we know that for every $x \in M\backslash\left\{a\right\}$ we have that $x \notin P_{\gamma}(a, b)$, this meaning that
\[
M \cap P_{\gamma}(a, b) = \left\{a\right\}.
\]
This concludes the proof. Observe also, that by \eqref{7 de Ekeland},
\[
d(a, b) < d(x_{0}, b) - \gamma d(x_{0}, a),
\]
hence
\[
\gamma d(x_{0}, a) < -d(a, b) + d(x_{0}, b) = s - d(a, b).
\]
By hypothesis we know that $r < d(b, M) < d(b, a)$, thus
\[
d(x_{0}, a) < \frac{s - r}{\gamma}.
\]
\end{proof}

\begin{proof}[Theorem \ref{Petal theorem} $\Rightarrow$ Theorem \ref{Drop theorem}]
Let $B = B(b, r)$, and consider $X = C \cap D(x_{0}; B)$ which is a complete and $G$-invariant subspace. Define $d = d(b, C)$ and $\gamma = \frac{d - r}{d + r}$. By the Petal's theorem there exists a $G$-invariant point, say $a$, such that $\left\{a\right\} = X \cap P_{\gamma}(a, b)$.

Observe now that, since $t = d(a, b) \geq d > r$, it is clear that
\[
\frac{d - r}{d + r} \leq \frac{t - r}{t + r},
\]
so $D(a, B) \subseteq P_{\gamma}(a, b)$ for $t > r$. Moreover, since $a \in D(x_{0}, B)$, then $D(a, B) \subseteq D(x_{0}, B)$. Therefore
\[
D(a, B) \cap C \subseteq D(a, B) \cap \left(D(x_{0}, B) \cap C\right) \subseteq P_{\gamma}(a, b) \cap X = \left\{a\right\}.
\]
\end{proof}

\begin{Remark}
    Note that to prove the last implication, it is not required the condition of $d$ being convex with respect to $G$. 
\end{Remark}

In order to prove the last implication we will use the following lemma, whose proof can be found in \cite[Lemma 2.3]{Penot}.
\begin{Lemma}\label{Lema a gota implica Ekeland}
Let $X$ be a normed vector space, $B = \overline{B((0, h), r)} \subseteq X \times \mathbb{R}$ with radius $r \in ]0, h[$ and the norm $\Vert (x, r) \Vert = \max(\Vert x \Vert, r)$. Then, the cone $K = \mathbb{R}_{+}B$ generated by $B$ is given by
\[
K = \left\{(x, t) \in X \times \mathbb{R} ~ | ~ t \geq r^{-1}(h - r)\Vert x \Vert\right\}.
\]
\end{Lemma}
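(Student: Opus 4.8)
The plan is to reduce everything to an explicit description of the ball $B$ under the max norm, and then to characterize the generated cone by a one-parameter scaling argument. First I would unpack the norm: since $\Vert (y, s) \Vert = \max(\Vert y \Vert, |s|)$ on $X \times \mathbb{R}$, the closed ball $B = \overline{B((0, h), r)}$ is the ``box''
\[
B = \left\{(y, s) \in X \times \mathbb{R} ~ | ~ \Vert y \Vert \leq r, \ h - r \leq s \leq h + r\right\}.
\]
Here the hypothesis $r < h$ guarantees $h - r > 0$, so every point of $B$ has strictly positive last coordinate; this is exactly what makes the cone pointed and produces a positive slope. I would then write membership in $K = \mathbb{R}_{+}B$ explicitly: $(x, t) \in K$ if and only if there exist $\lambda \geq 0$ and $(y, s) \in B$ with $x = \lambda y$ and $t = \lambda s$. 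The degenerate cases can be dispatched first, namely $\lambda = 0$ (which yields only the origin, and $(0,0)$ does satisfy the claimed inequality) and $x = 0$ (for which one takes $y = 0$, $s = h$), so that I may concentrate on $\lambda > 0$ and $\Vert x \Vert > 0$.

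For the inclusion $K \subseteq \{(x,t) : t \geq r^{-1}(h - r)\Vert x \Vert\}$, from $x = \lambda y$ with $\Vert y \Vert \leq r$ I obtain $\lambda \geq \Vert x \Vert / r$, and from $t = \lambda s$ with $s \geq h - r > 0$ I obtain
\[
t = \lambda s \geq \lambda(h - r) \geq \frac{\Vert x \Vert}{r}(h - r) = r^{-1}(h - r)\Vert x \Vert,
\]
which is the claimed inequality. For the reverse inclusion I would, given a point $(x, t)$ with $t \geq r^{-1}(h - r)\Vert x \Vert$, solve for a feasible scaling factor $\lambda$. Setting $y = x/\lambda$ and $s = t/\lambda$, the three constraints $\Vert y \Vert \leq r$, $s \leq h + r$, and $s \geq h - r$ translate (using $h \pm r > 0$) into $\lambda \geq \Vert x \Vert / r$, $\lambda \geq t/(h + r)$, and $\lambda \leq t/(h - r)$ respectively. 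Thus the admissible values of $\lambda$ form the interval
\[
\left[\max\left(\frac{\Vert x \Vert}{r}, \frac{t}{h + r}\right), \ \frac{t}{h - r}\right],
\]
and choosing any $\lambda$ in it, together with the associated $y$ and $s$, recovers a valid representation of $(x, t)$ as an element of $K$.

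The main obstacle I would flag is precisely this choice of $\lambda$ in the reverse direction. The naive attempt $\lambda = \Vert x \Vert / r$ (which normalizes $y$ to $\Vert y \Vert = r$) can push $s = t/\lambda$ above the upper bound $h + r$ when $t$ is large, so one cannot simply rescale $y$ to the sphere. The correct observation is that the admissible interval above is nonempty exactly when $\Vert x \Vert / r \leq t/(h - r)$ (the comparison $t/(h + r) \leq t/(h - r)$ being automatic for $t > 0$), and this nonemptiness condition is \emph{precisely} the hypothesis $t \geq r^{-1}(h - r)\Vert x \Vert$. Once this equivalence is spotted, the two inclusions close the argument; everything else is routine inequality bookkeeping, with the sign condition $r < h$ silently doing the work of keeping all the bounds oriented the right way.
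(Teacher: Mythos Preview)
The paper does not supply its own proof of this lemma; it simply cites \cite[Lemma~2.3]{Penot}. Your argument is correct and is essentially the direct computation one expects: you identify $B$ as the product $\overline{B_X(0,r)} \times [h-r,h+r]$ under the max norm, then handle each inclusion by analysing the admissible scaling factors $\lambda$. The key step---showing that the interval $\bigl[\max(\Vert x\Vert/r,\ t/(h+r)),\ t/(h-r)\bigr]$ is nonempty precisely when $t \geq r^{-1}(h-r)\Vert x\Vert$---is correctly isolated and verified, and the degenerate cases ($\lambda=0$, $x=0$) are dispatched cleanly. There is nothing to add.
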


Now we can move to the proof of the last implication.
\begin{proof}[Theorem \ref{Drop theorem} $\Rightarrow$ Theorem \ref{Ekeland's altered}]
We start replacing $d$ by $d' = \min(\delta, d)$ where $\delta = \frac{1}{\gamma}(f(x_{0}) - \inf f(M) + 1)$. Observe that the two conditions of Ekeland's variational principle will still hold if we consider the distance $d'$ instead of the distance $d$. Let $F$ be the normed vector space of the continuous functions in $M$ with the supremum norm. Then, $(M, d)$ can be isometrically embedded in $F$ via the mapping
\[
\begin{array}{ccc}
    (M, d) & \to & F \\
    x & \mapsto & d_{x}(y) = d(x, y),
\end{array}
\]
and $M \subseteq F$ is complete. Define $E = F \times \mathbb{R}$ with the norm $\Vert (x, t) \Vert = \max\left(\Vert x \Vert, |t|\right)$.

Without loss of generality we may assume that $x_{0} = 0$ and $f(x_{0}) = 0$. If this were not the case, we could achieve this by translating via the mapping $x \mapsto f(x) - f(x_{0})$. We define now $\psi = -f$ which is $G$-invariant, and observe that $\psi(x_{0}) = f(x_{0}) = 0$. Take $m = \sup\left\{\psi(x) ~ | ~ x \in M\right\}$, $r > \frac{m}{\gamma}$, $h = \gamma r + r > m + r$, and define finally $B = B((0, h), r) = B(0, r) \times [h - r, h + r]$ and $K = \mathbb{R}_{+}B$. For given $(x, t) \in B$ we have that
\[
t \geq h - r > m,
\]
therefore $(x, t) \notin C = \text{Hipo}(\psi) = \left\{(x, t) \in M \times \mathbb{R} ~ | ~ t \leq \psi(x)\right\}$. Observe also that $(0, 0) \in Hipo(\psi)$. Hence, by Theorem \ref{Drop theorem}, there exists a $G$-invariant point $(a, \alpha) \in C \cap D((0, 0), B)$ such that
\[
\left\{(a, \alpha)\right\} = C \cap D((a, \alpha), B).
\]
Notice that
\[
D((0, 0), B) = \left\{(0, 0) + t((0, 0) + b) ~ | ~ t \in [0, 1], \, b \in B\right\} =
\]\[
= \left\{tb ~ | ~ t \in [0, 1], \, b \in B\right\} = [0, 1] \cdot B = [0, 1] \cdot B((0, h), r).
\]
Since $(a, \alpha) \in D((0, 0), B)$, in particular $a \in B(0, r)$ and $(a, h) \in B \subseteq D((0, 0), B)$. Then, by convexity
\[
(a, t) = \beta(a, \alpha) + (1 - \beta)(a, h) \quad \hbox{for } \beta \in [0, 1],
\]
thus $(a, t) \in D((0, 0), B)$. Observe now that it can not happen that $\alpha < \psi(a)$, otherwise $(a, \alpha) \in B$ and $(a, \alpha) \in \text{int}(C)$, a contradiction. Therefore, $\alpha = \psi(a)$.

Applying now Lemma \ref{Lema a gota implica Ekeland} we know that
\[
K = \mathbb{R}_{+}B = \left\{(x, t) \in F \times \mathbb{R} ~ | ~ t \geq r^{-1}(h - r)\Vert x \Vert\right\}.
\]
Since $(a, \alpha) \in B$ and $\alpha = \psi(a)$:
\[
\psi(a) \geq r^{-1}(h - r)\Vert a \Vert = \gamma\Vert a \Vert = \gamma d(a, x_{0}),
\]
from where we deduce \eqref{Dos de Ekeland}, taking into account that $\psi = -f$ and $f(x_{0}) = 0$, i.e.,
\[
f(a) \leq f(x_{0}) - \gamma d(a, x_{0}).
\]
Let now $(x, t) \in (a, \alpha) + K$, where $x \in M\backslash\left\{a\right\}$ and $t \leq m$. It is clear that $t - \alpha \geq \gamma \Vert x - a \Vert > 0$, so we can write
\[
(x - a, t - \alpha) = s(z, h - r - \alpha)
\]
where $z = s^{-1}(x - a)$, $s = \frac{t - \alpha}{h - r - \alpha}$. Observe that, since
\[
t - \alpha \leq m - \alpha < \gamma r - \alpha = h - \alpha - r,
\]\[
h - r - \alpha \geq h - r - m > 0,
\]
it is clear that $s \in ]0, 1[$. Moreover, $K$ is a convex cone, so
\[
(a + z, h - r) = (a, \alpha) + (z, h - r - \alpha) \in K,
\]
thus $(a + z, h - r) \in K \cap (E \times \left\{h - r\right\}) \subseteq B$, and by convexity of $D((a, \alpha), B)$ we have that
\[
(x, t) = (a, \alpha) + s((a + z, h - r) - (a, \alpha)) \in D((a, \alpha), B).
\]
In particular $(x, t) \notin C$. Since for all $x \in M$, $\psi(x) \leq m$, then $(x, \psi(x)) \notin (a, \alpha) + K$, therefore
\[
\psi(x) - \psi(a) < \gamma \Vert x - a \Vert.
\]
Thus
\[
f(a) < f(x) + \gamma \Vert x - a \Vert.
\]
\end{proof}

We would like now to focus on a different version of group invariant Drop theorem and Petal's theorem, where instead of looking for group invariant points in the solution, we search for group invariant sets. Let us give a previous definition.
\begin{Definition}
    Let $(M, d)$ be a metric space and $G$ be a compact topological group of isometries acting on $M$. For a point $x\in E$ we define 
    \[
    s_G(x):=\inf\{d(x,g(x)) ~ | ~ g \in G \text{ and } g(x) \ne x\}.
    \]
\end{Definition}

The following result is a slight modification of the Petal theorem that allows us to extend the classical Petal theorem to what we call the flower theorem.
\begin{Proposition}
    Let $(M, d)$ be a metric space, $G \subseteq \mathcal{L}(M)$ be a compact topological group of isometries acting on $M$, and $C \subset M$ a complete $G$-invariant subset of $M$. Let $x_{0} \in C_{G}$, $b \in M \backslash C$. Then, for every $\gamma > 0$, there exists $a \in C \cap P_{\gamma}(x_{0}, b)$ such that
    \[
    C \cap P_{\gamma}(g(a), g(b)) = \left\{g(a)\right\}\text{ for every }g \in G.
    \]
    Furthermore, for every $g,g'\in G$ with $d(g(b),g'(b))>2d(b,C)$ we have that 
    \[
    P_{\gamma}(g(a), g(b))\cap P_{\gamma}(g'(a), g'(b))=\emptyset. 
    \]

\end{Proposition}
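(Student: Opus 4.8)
The plan is to grow the whole flower from a single petal and then transport it around the orbit by equivariance. \emph{First step.} I would apply the classical (non-invariant) Petal theorem---which is the special case of Theorem \ref{Petal theorem} obtained by taking $G=\{\mathrm{Id}\}$, so that the convexity-with-respect-to-$G$ hypothesis becomes vacuous---to the complete set $C$, the point $x_0\in C$ and $b\in M\setminus C$. This produces a seed point $a\in C\cap P_\gamma(x_0,b)$ with $C\cap P_\gamma(a,b)=\{a\}$, which also gives the required membership $a\in C\cap P_\gamma(x_0,b)$.

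\emph{Second step (the flower).} Fix $g\in G$. By the Proposition establishing $g(P_\gamma(a,b))=P_\gamma(g(a),g(b))$, and since $C$ is $G$-invariant (so $g(C)=C$) while $g$ is a bijection of $M$, I would compute
\[
C\cap P_\gamma(g(a),g(b))=g(C)\cap g(P_\gamma(a,b))=g\bigl(C\cap P_\gamma(a,b)\bigr)=g(\{a\})=\{g(a)\}.
\]
As $g\in G$ was arbitrary, this is exactly the first assertion.

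\emph{Third step (disjointness) and the main obstacle.} Suppose, for a contradiction, that $x\in P_\gamma(g(a),g(b))\cap P_\gamma(g'(a),g'(b))$. Writing out both petal inequalities and discarding the nonnegative terms $\gamma d(g(a),x)$ and $\gamma d(g'(a),x)$ gives $d(g(b),x)\le d(g(a),g(b))$ and $d(g'(b),x)\le d(g'(a),g'(b))$; since $g,g'$ are isometries these right-hand sides both equal $d(a,b)$, so the triangle inequality yields $d(g(b),g'(b))\le 2\,d(a,b)$. This already proves disjointness whenever $d(g(b),g'(b))>2\,d(a,b)$.

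The difficulty is that $a\in C$ only gives $d(a,b)\ge d(b,C)$, so the clean estimate above is a priori weaker than the stated threshold $2\,d(b,C)$, and closing that gap is where I expect the real work to lie. The only extra information available is the uniqueness clause $C\cap P_\gamma(g(a),g(b))=\{g(a)\}$ just proved: inside each petal the sole point of $C$ is its tip, so although the petal reaches distance $d(a,b)$ from $g(b)$, it does so through $M\setminus C$ except at that single tip, and two distinct tips $g(a)\ne g'(a)$ cannot be shared. I would therefore try to upgrade the crude estimate into a per-point bound of the form $d(g(b),x)\le d(b,C)$ valid for any $x$ lying in two petals---for instance by forcing a nearest point of $C$ to $g(b)$ into the petal and invoking the uniqueness---from which the triangle inequality would give precisely $d(g(b),g'(b))\le 2\,d(b,C)$ and contradict the hypothesis. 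I expect this refinement, rather than the elementary $d(a,b)$ bound, to carry the genuine content of the statement, and I would pay particular attention to the range of $\gamma$ for which the argument closes.
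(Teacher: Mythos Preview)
The paper does not supply a proof of this proposition; it only calls it ``a slight modification of the Petal theorem'' and illustrates it with a picture. Your first two steps---apply the classical Petal theorem with the trivial group to obtain a seed $a\in C\cap P_\gamma(x_0,b)$ with $C\cap P_\gamma(a,b)=\{a\}$, then transport by the equivariance identity $g(P_\gamma(a,b))=P_\gamma(g(a),g(b))$ together with $g(C)=C$---are correct and are exactly the ``slight modification'' the authors have in mind. The first assertion is thus fully proved by your argument.

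Your third step is also correct as far as it goes: if $x$ lies in both petals, then $d(g(b),x)\le d(a,b)$ and $d(g'(b),x)\le d(a,b)$, so $d(g(b),g'(b))\le 2\,d(a,b)$. This yields disjointness under the hypothesis $d(g(b),g'(b))>2\,d(a,b)$, and you are right that this is \emph{a priori} weaker than the paper's stated threshold $2\,d(b,C)$, since $d(a,b)\ge d(b,C)$.

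Where you go wrong is in expecting this gap to close. It cannot: the ``Furthermore'' clause as printed is false in general. Take $M=\mathbb{R}^2$, $G=\{\mathrm{Id},-\mathrm{Id}\}$, $C=\{(0,0),(10,0),(-10,0)\}$, $x_0=(0,0)$, $b=(10,1)$. Then $d(b,C)=1$ and $d(b,-b)=\sqrt{404}>2=2\,d(b,C)$, so the hypothesis is met. For any $\gamma>(\sqrt{101}-1)/10\approx 0.905$ one checks directly that $C\cap P_\gamma(x_0,b)=\{(0,0)\}$, so the only admissible $a$ is the origin. But then $g(a)=a=0$ for every $g\in G$, and $0$ lies in both $P_\gamma(0,b)$ and $P_\gamma(0,-b)$, so the two petals are not disjoint. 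Your proposed refinement---forcing a nearest point of $C$ into the petal to obtain $d(g(b),x)\le d(b,C)$---therefore cannot succeed; the correct threshold in the disjointness clause is $2\,d(a,b)$, which is precisely what your elementary estimate delivers.
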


\begin{figure}[H]
\begin{center}
\includegraphics[width=.5\textwidth]{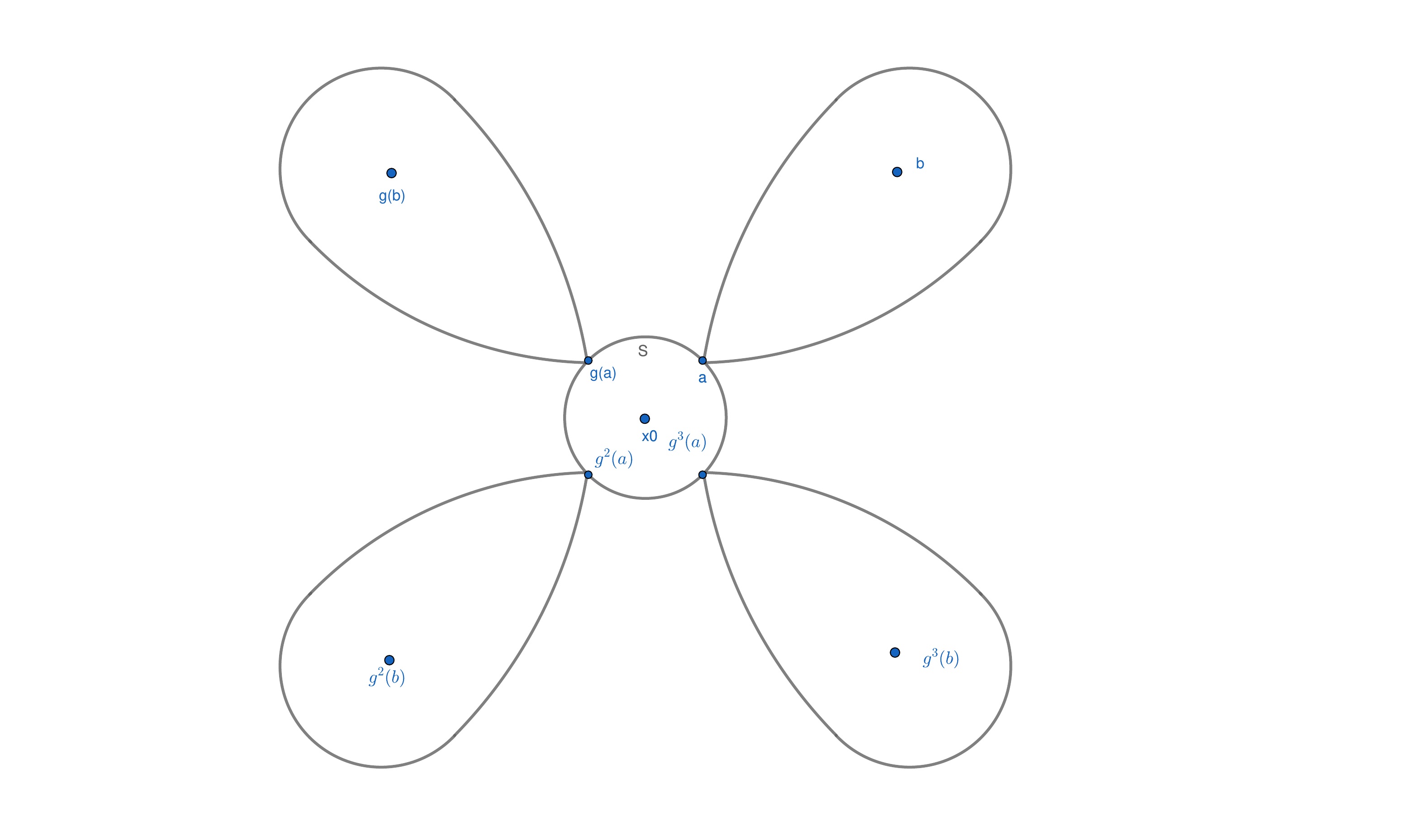}
\end{center}
\caption{Flower theorem statement.}
\label{Figura 5.4}
\end{figure}

We can also obtain an analogous generalization for the Drop theorem.
\begin{Proposition}\label{Generalized drop}
    Let $(E, \Vert \cdot \Vert)$ be a normed space, $G \subseteq \mathcal{L}(E)$ be a compact topological group of isometries acting on $E$, and $C \subset E$ be a complete $G$-invariant subset of $E$. Let $x_{0} \in C_{G}$, $b \in E \backslash C$. Then, there exists $a \in C \cap D(x_{0}, b)$ such that  
    \[
    C \cap D(g(a), g(b)) = \left\{g(a)\right\}\hbox{ for every }g \in G.
    \]
    Furthermore, for every $g,g'\in G$ with $d(g(b),g'(b))>2d(b,C)$ we have that
    \[
    D(g(a), g(b)) \cap D(g'(a), g'(b)) = \emptyset
    \] 
    for every $g,g'\in G$ with $g(b)\ne g'(b)$.
\end{Proposition}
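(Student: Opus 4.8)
The plan is to mirror the structure of the classical Drop theorem (Theorem \ref{Drop theorem}) but to run it for the point $b$ itself rather than for a $G$-invariant point, and then exploit $G$-invariance of $C$ to transport the conclusion across the whole orbit. First I would apply the ordinary (non-invariant) Drop theorem, or more precisely the argument behind Theorem \ref{Drop theorem}, to the single point $x_0 \in C_G$, the point $b \in E \setminus C$, and the set $C$: this yields a point $a \in C \cap D(x_0, b)$ with $C \cap D(a, b) = \{a\}$. The key observation is that $a$ itself need \emph{not} be $G$-invariant here, which is exactly why the conclusion is stated pointwise over the orbit $\{g(a) : g \in G\}$ rather than for a single invariant point.

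Next I would propagate the minimality condition across the group. For any fixed $g \in G$, I apply $g$ to both sides of $C \cap D(a, b) = \{a\}$ and use two facts established earlier: that $g$ is a bijective isometry of $E$ (so $g$ maps the intersection of sets to the intersection of their images), that $C$ is $G$-invariant so $g(C) = C$, and the compatibility identity $g(D(a, b)) = D(g(a), g(b))$ which follows from the Proposition stating $g(D(a,B)) = D(g(a), g(B))$ (applied with $B = \{b\}$, or directly from linearity of $g$). Combining these gives
\[
C \cap D(g(a), g(b)) = g(C) \cap g(D(a, b)) = g\bigl(C \cap D(a, b)\bigr) = g(\{a\}) = \{g(a)\},
\]
which is the first assertion. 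The single nontrivial point to check is that $D(x_0, b)$ together with $a \in C \cap D(x_0,b)$ can genuinely be produced by the classical drop argument; this is routine once one notes $x_0$ is a fixed starting point and no invariance of $a$ is demanded.

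For the disjointness clause, I would argue by contradiction using a triangle-inequality/diameter estimate. Suppose $g, g' \in G$ satisfy $d(g(b), g'(b)) > 2\, d(b, C)$ yet there exists $z \in D(g(a), g(b)) \cap D(g'(a), g'(b))$. Every point of the drop $D(g(a), g(b))$ lies on a segment from $g(a) \in C$ to $g(b)$, and since $g$ is an isometry we have $d(g(b), C) = d(b, C)$; hence any point of $D(g(a), g(b))$ is within distance $d(b,C)$ of the orbit point $g(b)$ in the relevant sense, and similarly for $g'$. The plan is to bound $d(g(b), g'(b))$ by $d(g(b), z) + d(z, g'(b))$ and show each term is at most $d(b,C)$, forcing $d(g(b), g'(b)) \le 2\, d(b,C)$, contradicting the hypothesis.

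The main obstacle I anticipate is precisely this last estimate: one must verify carefully that a point $z$ lying on a segment from $g(a)$ to $g(b)$, with $g(a) \in C$, actually satisfies $d(z, g(b)) \le d(b, C)$. This is \emph{not} automatic from $r < d(b,C)$ as in the original Drop theorem, because here $B$ has collapsed to the single point $b$ (so $r = 0$) and the bound must instead come from the location of $g(a)$ relative to $g(b)$. I would control this by noting $d(g(a), g(b)) = d(a, b)$ and that $a$ was produced inside $D(x_0, b)$; some care is needed to confirm the segment points stay within the claimed distance, and I would also double-check the stated hypothesis $d(g(b), g'(b)) > 2 d(b,C)$ is the correct threshold (the redundant trailing clause ``for every $g, g'$ with $g(b) \ne g'(b)$'' suggests the intended reading is that disjointness holds under the diameter condition). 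Modulo pinning down this single metric inequality, the rest of the argument is a direct transport of the non-invariant Drop theorem along the group orbit.
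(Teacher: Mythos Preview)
The paper does not supply a proof of this proposition; it is stated and accompanied only by a figure, with the remark that it is ``an analogous generalization'' of the flower theorem. Your approach to the first assertion---apply the classical (non-invariant) drop/segment argument to obtain $a\in C\cap D(x_0,b)$ with $C\cap D(a,b)=\{a\}$, then transport along the orbit using $g(C)=C$ and $g(D(a,b))=D(g(a),g(b))$---is correct and is exactly what the paper's phrasing suggests. In fact, since here $D(x_0,b)$ is just the segment $[x_0,b]$, one can produce $a$ directly as the point of the compact set $C\cap[x_0,b]$ closest to $b$, without invoking the full Drop theorem.

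The disjointness clause, however, has a genuine problem, and the inequality you propose is the wrong one. You want $d(z,g(b))\le d(b,C)$ for every $z\in[g(a),g(b)]$, but taking $z=g(a)\in C$ gives $d(z,g(b))=d(a,b)\ge d(b,C)$, the reverse inequality. What \emph{is} true is $d(z,g(b))\le d(a,b)$ for $z$ on that segment, so your triangle-inequality scheme yields disjointness under the hypothesis $d(g(b),g'(b))>2\,d(a,b)$, not $2\,d(b,C)$. The threshold $2\,d(b,C)$ stated in the paper is too weak: take $E=\mathbb{R}^2$, $G=\{\mathrm{Id},-\mathrm{Id}\}$, $C=[-1,1]\times\{0\}$, $x_0=0$, and $b=(5,10)$. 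Then $d(b,-b)=\sqrt{500}>2\sqrt{116}=2\,d(b,C)$, but the only admissible $a$ is $0$ (the segment $[0,b]$ meets $C$ only at $0$), and the two drops $[0,b]$ and $[0,-b]$ meet at $0$. So the obstacle you flagged is real: as written, the ``furthermore'' part cannot be proved, and the fix is to replace $d(b,C)$ by $d(a,b)$ (or to add a hypothesis ensuring $a$ realises $d(b,C)$).
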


\begin{figure}[H]
\begin{center}
\includegraphics[width=.7\textwidth]{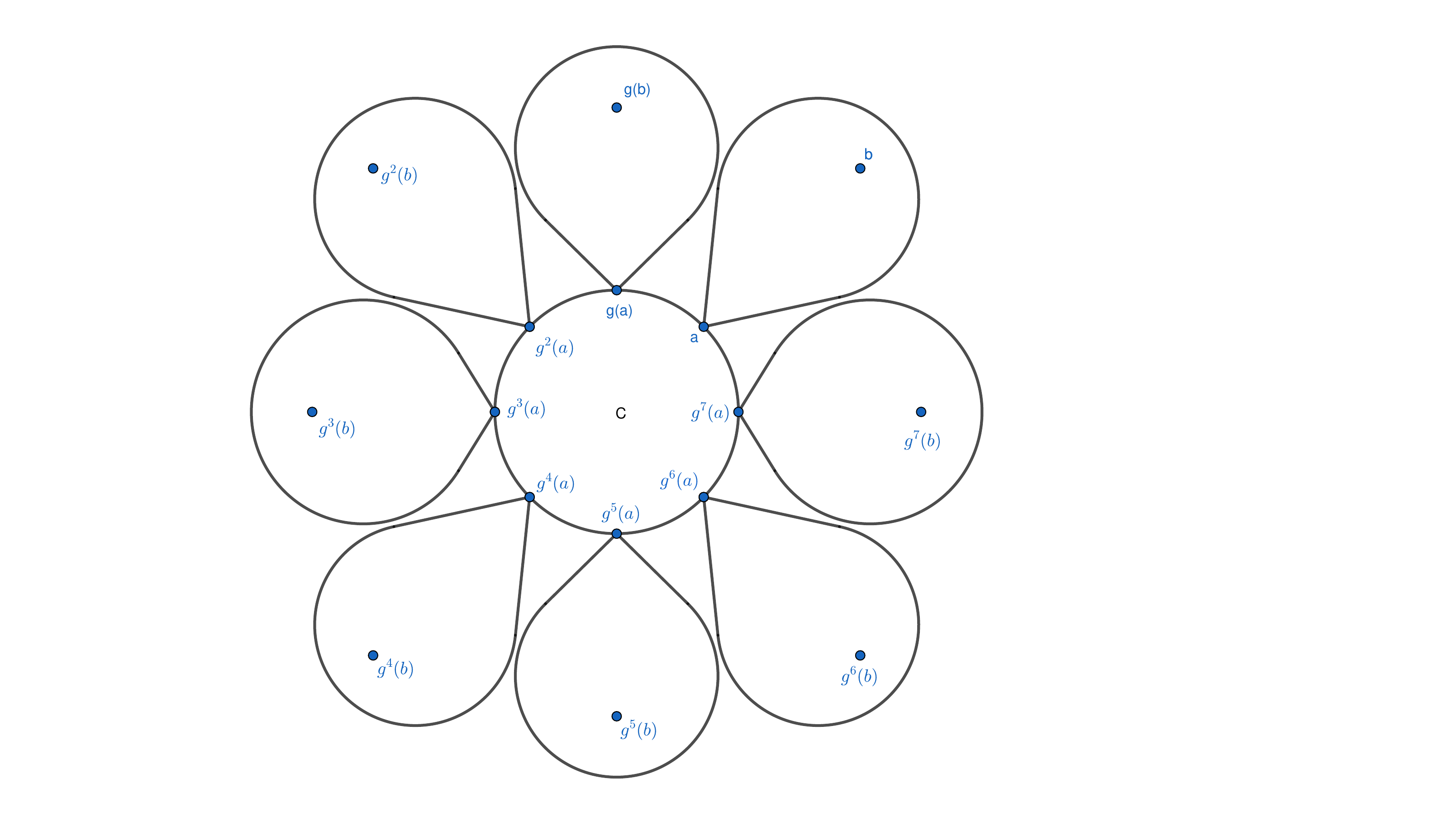}
\end{center}
\caption{Generalized Drop theorem statement.}
\label{Figura 5.5}
\end{figure}

\section{Caristi-Kirk, Ekeland and Takahasi's theorem}

In this section we want to show the equivalence between the Ekeland's variational principle, the Caristi-Kirk fixed point theorem, and the Takahasi's theorem, all of them in the group invariant setting. Let's first do some assumptions.

Let $(M, d)$ be a complete metric space and $G \subseteq \mathcal{L}(M)$ be a compact topological group of isometries acting on $M$, so that $d$ is $G$-invariant. Let $f \colon M \times M \to \mathbb{R}\cup\left\{+\infty\right\}$ be a function which is lower semicontinuous, $G$-invariant and convex with respect to $G$ in the second coordinate that also satisfies
\begin{gather}\label{condicions primeres equivalències Ekeland}
    f(x, x) = 0  \quad \forall \, x \in M, \\
    f(x, y) \leq f(x, z) + f(z, y) \quad \forall \, x, y, z \in M.
\end{gather}
Assume that there exists $x_{0} \in M_{G}$ such that
\begin{equation}\label{Condicions segones equivalències Ekeland}
    \inf_{x \in M} f(x_{0}, x) > -\infty,
\end{equation}
and define the set
\begin{equation}\label{Conjunt S0}
S_{0} = \left\{x \in M ~ | ~ f(x_{0}, x) + d(x_{0}, x) \leq 0\right\}.
\end{equation}
Note that by the $G$-invariance of $f$, $d$ and $x_{0}$, the set $S_{0}$ is also $G$-invariant.

The assumptions on $f$ and the existence of $x_{0}$ will hold through this section.

In order to show the previously mentioned equivalences we will use the following auxiliary result.
\begin{Theorem}\label{Teorema tècnic}
Let $U \subseteq M$ be $G$-invariant satisfying that
\begin{equation}
    \forall \, y \in S_{0} \backslash U, \, \exists \, x \in M_{G} \hbox{ such that } x \neq \overline{y} \hbox{ and } f(\overline{y}, x) + d(\overline{y}, x) \leq 0.
\end{equation}
Then, there exists $\hat{x} \in (S_{0} \cap U)_{G}$.
\end{Theorem}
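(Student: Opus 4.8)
The plan is to derive the point $\hat{x}$ directly from the group invariant Ekeland variational principle (Theorem \ref{Ekeland's altered}), applied to the one-variable function obtained by freezing the first argument of $f$ at $x_{0}$. Concretely, I would set
\[
\varphi \colon M \to \mathbb{R} \cup \{+\infty\}, \qquad \varphi(x) = f(x_{0}, x),
\]
and apply Theorem \ref{Ekeland's altered} to $\varphi$ with $\gamma = 1$ and starting point $x_{0}$. The expected output is a $G$-invariant minimizer-type point $a$ which, I will argue, already lies in $S_{0} \cap U$.

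First I would verify that $\varphi$ satisfies every hypothesis of Theorem \ref{Ekeland's altered}. Lower semicontinuity of $\varphi$ is inherited from that of $f$; properness follows from $\varphi(x_{0}) = f(x_{0}, x_{0}) = 0$; boundedness below is exactly the assumption \eqref{Condicions segones equivalències Ekeland}; and convexity with respect to $G$ is inherited from the convexity of $f$ in its second coordinate. For $G$-invariance, since $x_{0} \in M_{G}$ we have $g(x_{0}) = x_{0}$ for all $g \in G$, so the $G$-invariance of $f$ yields $\varphi(g(x)) = f(x_{0}, g(x)) = f(g^{-1}(x_{0}), x) = f(x_{0}, x) = \varphi(x)$. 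With these checks in place, Theorem \ref{Ekeland's altered} supplies a $G$-invariant point $a \in M$ satisfying
\[
f(x_{0}, a) < f(x_{0}, x) + d(x, a) \quad \forall\, x \neq a, \qquad f(x_{0}, a) \le -\,d(a, x_{0}).
\]
The second inequality rearranges to $f(x_{0}, a) + d(x_{0}, a) \le 0$, which is precisely the condition defining $S_{0}$ in \eqref{Conjunt S0}; hence $a \in S_{0}$ and $a$ is $G$-invariant.

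It then remains to show $a \in U$, which I would do by contradiction. Suppose $a \in S_{0} \setminus U$. Since $a$ is $G$-invariant, its symmetrization satisfies $\overline{a} = \int_{G} g(a)\, d\mu(g) = \int_{G} a\, d\mu(g) = a$. The standing hypothesis on $U$ then produces a point $x \in M_{G}$ with $x \neq \overline{a} = a$ and $f(a, x) + d(a, x) \le 0$. Feeding this $x$ (which is distinct from $a$) into the first Ekeland inequality and invoking the triangle-type inequality $f(x_{0}, x) \le f(x_{0}, a) + f(a, x)$ from \eqref{condicions primeres equivalències Ekeland}, I obtain
\[
f(x_{0}, a) < f(x_{0}, x) + d(x, a) \le f(x_{0}, a) + f(a, x) + d(x, a),
\]
which simplifies to $0 < f(a, x) + d(a, x)$, contradicting $f(a, x) + d(a, x) \le 0$. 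Therefore $a \in U$, and setting $\hat{x} := a$ gives $\hat{x} \in (S_{0} \cap U)_{G}$.

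The routine part of this argument is the verification that $\varphi$ meets the hypotheses of the group invariant Ekeland principle. The only genuinely delicate point I expect is the combination step, where the strict minimality encoded in the first Ekeland inequality must be played off against the triangle inequality for $f$ and the defining property of $U$. The small but essential observation that makes everything fit is that $\overline{a} = a$ for a $G$-invariant point $a$, so that the hypothesis on $U$, stated at $\overline{y}$, can be applied at $\overline{a}$ and then read back as a statement about $a$ itself.
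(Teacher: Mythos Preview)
Your argument is correct and takes a genuinely different route from the paper. The paper proves Theorem~\ref{Teorema tècnic} by a self-contained Cantor--intersection construction: it builds a nested sequence of $G$-invariant sets $S_n$ with $\operatorname{diam}(S_n)\to 0$, extracts a Cauchy sequence of $G$-invariant points, and shows the limit $\hat{x}$ lands in $U$ by contradiction. You instead recognise that the whole statement is a single application of the already established Theorem~\ref{Ekeland's altered} to $\varphi = f(x_0,\cdot)$, followed by the triangle inequality for $f$. What the paper's construction buys is independence: Theorem~\ref{Teorema tècnic} then stands on its own as the anchor for the equivalences in Theorem~\ref{Ekeland}, without presupposing any Ekeland-type result. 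What your route buys is economy and transparency---it makes explicit that item~(i) of Theorem~\ref{Ekeland} is nothing more than Ekeland applied to $f(x_0,\cdot)$, and indeed your contradiction step is essentially the paper's own proof of $(ii)\Rightarrow(i)$ in Theorem~\ref{Ekeland}.

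One minor slip: in checking $G$-invariance of $\varphi$ you write $f(x_0,g(x)) = f(g^{-1}(x_0),x)$, which would require joint invariance $f(g\cdot,g\cdot)=f(\cdot,\cdot)$. The standing hypothesis is invariance in the second coordinate only, i.e.\ $f(a,g(b))=f(a,b)$; under that assumption $\varphi(g(x))=f(x_0,g(x))=f(x_0,x)=\varphi(x)$ directly, so the conclusion is unaffected---just delete the intermediate equality.
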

\begin{proof}
    Let us construct recursively a sequence of $G$-invariant points $x_{n} \in M$. Consider the initial point $x_{0}$ as the one given in assumption \eqref{Condicions segones equivalències Ekeland}, so
    \[
    \inf_{x \in M}f(x_{0}, x) > -\infty.
    \]
    
    Given $x_{n}$ define the set
    \[
    S_{n} = \left\{x \in M ~ | ~ f(x_{n}, x) + d(x_{n}, x) \leq 0\right\},
    \]
    and the number
    \[
    \gamma_{n} = \inf_{x \in S_{n}}f(x_{n}, x). 
    \]
    Notice that for $n = 0$, the set $S_{0}$ is the one given in equation \eqref{Conjunt S0}.
    
    Clearly the set $S_{n}$ is $G$-invariant since $x_{n}$ is $G$-invariant, $d$ is $G$-invariant and $f$ is $G$-invariant with respect to the second coordinate. Also by assumption \eqref{condicions primeres equivalències Ekeland} it is clear that $S_{n} \neq \emptyset$, since $x_{n} \in S_{n}$, and $\gamma_{n} \leq 0$.
    
    For $n \geq 1$, suppose $x_{n-1}$ is known and $G$-invariant, and $\gamma_{n-1} > -\infty$. Set a $G$-invariant point $x_{n} \in S_{n-1}$ such that
    \begin{equation}\label{Desigualtat gamma i f}
    f(x_{n-1}, x_{n}) \leq \gamma_{n-1} + \frac{1}{n}.
    \end{equation}
    Under this assumptions we are going to show that $S_{n} \subseteq S_{n-1}$. Let $x \in S_{n}$, by assumption \eqref{condicions primeres equivalències Ekeland} and the fact that $x_{n} \in S_{n}$ and $x_{n} \in S_{n-1}$ it is clear that
    \[
    f(x_{n-1}, x) + d(x_{n-1}, x) \leq f(x_{n-1}, x_{n}) + d(x_{n-1}, x_{n}) + f(x_{n}, x) + d(x_{n}, x) \leq 0,
    \]
    so, indeed $x \in S_{n-1}$. Applying now \eqref{condicions primeres equivalències Ekeland} and \eqref{Desigualtat gamma i f} we see that $S_{n} \subseteq S_{n-1}$, and,
    \begin{align*}
    \gamma_{n} = \inf_{x \in S_{n}}f(x_{n}, x) &\geq \inf_{x \in S_{n}}\left(f(x_{n-1}, x) - f(x_{n-1}, x_{n})\right)  \\
    &\geq \inf_{x \in S_{n-1}}\left(f(x_{n-1}, x) - f(x_{n-1}, x_{n})\right)  \\
    & = \gamma_{n-1} - f(x_{n-1}, x_{n}) \geq -\frac{1}{n}.
    \end{align*}
    Then, if $x \in S_{n}$
    \[
    d(x_{n}, x) \leq -f(x_{n}, x) \leq -\gamma_{n} \leq \frac{1}{n}.
    \]
    Thus, $\text{diam}(S_{n}) \to 0$. Moreover for every $k \geq n$ it is clear that $x_{k} \in S_{k} \subseteq S_{n}$. In particular,
    \[
    d(x_{k}, x_{n}) \leq \frac{1}{n}.
    \]
    Hence $\left\{x_{n}\right\}$ is a Cauchy sequence of group invariant points. Therefore, since $M_{G}$ is closed, there exists a group invariant point, say $\hat{x}$, which is the limit of the sequence. Since $\text{diam}(S_{n}) \to 0$, it is clear that
    \[
    \bigcap_{n = 0}^{+\infty}S_{n} = \left\{\hat{x}\right\}.
    \]
    We claim that $\hat{x} \in U$. By contradiction, if $\hat{x} \notin U$, we know by hypothesis that there exists $x \in M$ such that $x \neq \hat{x}$ and
    \[
    f(\hat{x}, x) + d(\hat{x}, x) \leq 0.
    \]
    Also, since $\hat{x} \in \bigcap_{n = 0}^{+\infty}S_{n}$
    \[
    f(x_{n}, \hat{x}) + d(x_{n}, \hat{x}) \leq 0 \quad \forall \, n \geq 0.
    \]
    Now, applying \eqref{condicions primeres equivalències Ekeland}, we obtain
    \[
    f(x_{n}, x) + d(x_{n}, x) \leq 0 \quad \forall \, n \geq 0,
    \]
    this meaning that $x \in \bigcap_{n = 0}^{+\infty}S_{n}$. But this would be a contradiction with the fact that $\bigcap_{n = 0}^{+\infty}S_{n} = \left\{\hat{x}\right\}$. So $\hat{x} \in U$.
\end{proof}

To conclude this section we present the group invariant equivalences of Theorem \ref{Teorema tècnic}, that are the group invariant generalizations of Ekeland's theorem, Takahashi's theorem, and Caristi-Kirk fixed point theorem respectively.
\begin{Theorem}\label{Ekeland}
    Let $(M, d)$ be a complete metric space and $G \subseteq \mathcal{L}(M)$ be a compact topological group of isometries acting on $M$. Then, the following results are equivalent:
    \begin{enumerate}[(i)]
        \item Let $U \subseteq M$ be $G$-invariant satisfying that
        \begin{equation*}
        \forall \, y \in S_{0} \backslash U, \, \exists \, x \in M_{G} \hbox{ with } x \neq \overline{y} \hbox{ and } f(\overline{y}, x) + d(\overline{y}, x) \leq 0.
        \end{equation*}
        Then, there exists $\hat{x} \in (S_{0} \cap U)_{G}$.
    
        \item There exists $\hat{x} \in S_{0}$ such that $\hat{x}$ is $G$-invariant, and $f(\hat{x}, x) + d(\hat{x}, x) > 0$ for all $x \in M$, $x \neq \hat{x}$.

        \item Suppose $\forall \, y \in S_{0}$ with $\inf_{x \in M}f(\overline{y}, x) < 0$, there exists
        \begin{equation*}
        x \in M_{G} \hbox{ with } x \neq \overline{y} \hbox{ and } f(\overline{y}, x) + d(\overline{y}, x) \leq 0.
        \end{equation*}
        Then, there exists $\hat{x} \in (S_{0})_{G}$ such that $f(\hat{x}, x) \geq 0$ for all $x \in M_{G}$.

        \item Let $T \colon M \to M$ be a multivalued mapping such that for every $y \in S_{0}$ there exists 
        \begin{equation*}
        x \in (T(y))_{G} \hbox{ with } f(\overline{y}, x) + d(\overline{y}, x) \leq 0.
        \end{equation*}
        Then, there exists $\hat{x} \in (S_{0})_{G}$ such that $\hat{x} \in T(\hat{x})$.
    \end{enumerate}
\end{Theorem}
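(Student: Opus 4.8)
The plan is to establish the equivalence through the cycle of implications (i) $\Rightarrow$ (ii) $\Rightarrow$ (iii) $\Rightarrow$ (iv) $\Rightarrow$ (i). Since statement (i) is precisely Theorem \ref{Teorema tècnic}, which has already been proved under the standing assumptions on $f$, $d$ and $x_0$, closing this cycle simultaneously yields the logical equivalence of the four statements and their unconditional validity. Throughout I will freely use that the symmetrization is stable under the group action, namely $\overline{g(y)} = \overline{y}$ for all $g \in G$ (a consequence of the invariance of the Haar measure), and that symmetrization maps $S_0$ into $(S_0)_G$, both of which follow from the convexity of $f$ and $d$ with respect to $G$.

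For (i) $\Rightarrow$ (ii) I would apply Theorem \ref{Teorema tècnic} to the set $U := \{ y \in M \mid \text{there is no } x \in M_G \text{ with } x \neq \overline{y} \text{ and } f(\overline{y},x)+d(\overline{y},x) \leq 0 \}$. This $U$ is $G$-invariant because membership depends on $y$ only through $\overline{y}$, and $\overline{g(y)}=\overline{y}$. The hypothesis of (i) is satisfied on $S_0 \setminus U$ by the very definition of $U$, so Theorem \ref{Teorema tècnic} provides $\hat{x} \in (S_0 \cap U)_G$. The delicate point is that $\hat{x} \in U$ only forbids $G$-invariant competitors $x$, whereas (ii) demands the strict inequality for every $x \in M$. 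To bridge this gap I would argue by contradiction: if some $x \neq \hat{x}$ satisfied $f(\hat{x},x)+d(\hat{x},x)\leq 0$, then passing to the symmetrization $\overline{x}$ and using that $\hat{x}$ is $G$-invariant together with the convexity of $f$ and $d$ with respect to $G$ gives $f(\hat{x},\overline{x})+d(\hat{x},\overline{x}) \leq f(\hat{x},x)+d(\hat{x},x) \leq 0$. If $\overline{x} \neq \hat{x}$ this contradicts $\hat{x}\in U$; if $\overline{x}=\hat{x}$, then $0 = f(\hat{x},\hat{x}) \leq f(\hat{x},x) < 0$ (the strict sign coming from $d(\hat{x},x)>0$), again a contradiction. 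This convexity bridge from $M_G$ to $M$ is the step I expect to be the main obstacle.

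The implication (ii) $\Rightarrow$ (iii) is short: take the point $\hat{x}$ produced by (ii), which is $G$-invariant and lies in $S_0$. If $\inf_{x\in M} f(\hat{x},x)<0$, the hypothesis of (iii) applied to $y = \hat{x}$ (so $\overline{y}=\hat{x}$) would produce $x \neq \hat{x}$ with $f(\hat{x},x)+d(\hat{x},x)\leq 0$, contradicting the strict inequality in (ii); hence $f(\hat{x},x)\geq 0$ for every $x\in M$, in particular for $x\in M_G$. For (iii) $\Rightarrow$ (iv) I would argue by contradiction, assuming that no $w\in (S_0)_G$ satisfies $w\in T(w)$. To verify the hypothesis of (iii), given $y\in S_0$ with $\inf_x f(\overline{y},x)<0$ I apply the defining property of $T$ at the $G$-invariant point $\overline{y}$, which lies in $S_0$, obtaining $x\in (T(\overline{y}))_G$ with $f(\overline{y},x)+d(\overline{y},x)\leq 0$; the no-fixed-point assumption forces $x\neq\overline{y}$, giving the required witness. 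Then (iii) yields $\hat{x}\in (S_0)_G$ with $f(\hat{x},x)\geq 0$ for all $x\in M_G$, and applying the property of $T$ at $\hat{x}$ produces $x^{\ast}\in (T(\hat{x}))_G$ with $f(\hat{x},x^{\ast})+d(\hat{x},x^{\ast})\leq 0$; since $f(\hat{x},x^{\ast})\geq 0$ this forces $d(\hat{x},x^{\ast})=0$, so $\hat{x}=x^{\ast}\in T(\hat{x})$, contradicting the assumption.

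Finally, for (iv) $\Rightarrow$ (i), given a $G$-invariant $U$ satisfying the hypothesis of (i), I would define a singleton-valued map $T$ on $S_0$ by setting $T(y)=\{x_y\}$ for $y\in S_0\setminus U$, where $x_y\in M_G$, $x_y\neq\overline{y}$, is a witness furnished by the hypothesis of (i), and $T(y)=\{\overline{y}\}$ for $y\in S_0\cap U$ (extending $T$ arbitrarily off $S_0$). This $T$ satisfies the hypothesis of (iv), so (iv) provides a $G$-invariant $\hat{x}\in S_0$ with $\hat{x}\in T(\hat{x})$; were $\hat{x}\in S_0\setminus U$, then $\hat{x}\in T(\hat{x})=\{x_{\hat{x}}\}$ would force $\hat{x}=x_{\hat{x}}\neq\hat{x}$, so necessarily $\hat{x}\in(S_0\cap U)_G$, which is the conclusion of (i). Besides the convexity bridge in (i) $\Rightarrow$ (ii), the only other point requiring care is the systematic distinction between a point $y$ and its symmetrization $\overline{y}$: the map $T$ in (iii) $\Rightarrow$ (iv) and in (iv) $\Rightarrow$ (i) must be probed at $G$-invariant points, so that $\overline{y}$ coincides with the argument at which $f$ is evaluated.
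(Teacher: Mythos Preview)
Your argument is correct and closes the equivalence via the cycle $(i)\Rightarrow(ii)\Rightarrow(iii)\Rightarrow(iv)\Rightarrow(i)$, whereas the paper proves each of $(ii)$, $(iii)$, $(iv)$ separately equivalent to $(i)$, using $(i)$ as a hub. The hub structure keeps every implication one step away from Theorem~\ref{Teorema tècnic}, at the cost of six short arguments; your cycle needs only four, but the intermediate links $(ii)\Rightarrow(iii)$ and $(iii)\Rightarrow(iv)$ require you to invoke the hypothesis of the target statement and, in the latter case, to verify that $\overline{y}\in S_0$ so that $T$ may be applied there. You handle both points correctly. Another genuine difference is in $(i)\Rightarrow(ii)$: you define $U$ in terms of competitors $x\in M_G$, so the hypothesis of $(i)$ is automatic on $S_0\setminus U$, and then you pass from $M_G$ to all of $M$ via the symmetrization-and-convexity argument; the paper instead defines $\Gamma(y)$ with competitors $x\in M$, which gives the conclusion of $(ii)$ immediately from $\Gamma(\hat{x})=\emptyset$, but leaves the verification of the hypothesis of $(i)$ (which demands a witness in $M_G$) implicit. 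Your explicit convexity bridge is the cleaner way to close that gap, and it is exactly the point where the standing assumptions on $f$ and $d$ being convex with respect to $G$ are actually used.
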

\begin{proof}
    $(ii)$ $\Rightarrow$ $(i)$
    
    We know that there exists some $G$-invariant point $\hat{x} \in S_{0}$ such that
    \[
    f(\hat{x}, x) + d(\hat{x}, x) > 0 \quad \forall \, x \neq \hat{x}.
    \]
    In particular $\hat{x} \in U$, hence $\hat{x} \in S_{0} \cap U$.

    $(i)$ $\Rightarrow$ $(ii)$

    Take $y \in M$ and define
    \[
    \Gamma(y) = \left\{x \in M ~ | ~ x \neq \overline{y}, \hspace{0.2cm} f(\overline{y}, x) + d(\overline{y}, x) \leq 0\right\}.
    \]
    Define now $U = \left\{y \in M ~ | ~ \Gamma(y) = \emptyset\right\}$. Then, if $y \notin U$, by definition, there exists some $x$ such that $x \in \Gamma(y)$. Applying now $(i)$ there exists some $G$-invariant point $\hat{x} \in S_{0} \cap U$. So $\Gamma(\hat{x}) = \emptyset$, therefore
    \[
    f(\hat{x}, x) + d(\hat{x}, x) > 0 \quad \forall \, x \neq \hat{x}.
    \]

    $(iii)$ $\Rightarrow$ $(i)$

    We proceed by contradiction. Assume that $y \notin U$ for every $y \in S_{0}$. Then, by hypothesis, there exists some $x \in M_{G}$ such that $x \neq \overline{y}$ and
    \[
    f(\overline{y}, x) + d(\overline{y}, x) \leq 0.
    \]
    From this we can deduce that $\inf\left\{f(\overline{y}, x) ~ | ~ x \in M\right\} < 0$. Therefore, applying $(iii)$, there exists some $G$-invariant point $\hat{x} \in S_{0}$ such that $f(\hat{x}, x) \geq 0$ for every $x \in M_{G}$. This implies that
    \[
    f(\hat{x}, x) + d(\hat{x}, x) \geq 0 \quad \forall \, \hat{x} \neq x \in M_{G}.
    \]
    Thus a contradiction.
    
    $(i)$ $\Rightarrow$ $(iii)$
    
    Define
    \[
    U = \left\{y \in X ~ | ~ \inf_{x \in M}f(\overline{y}, x) \geq 0\right\}.
    \]
    Then the hypothesis of $(i)$ follows from the hypothesis of $(iii)$. Since $(i)$ holds, there exists some $G$-invariant point $\hat{x} \in S_{0} \cap U$. And since $\hat{x} \in U$, by definition,
    \[
    \inf_{x \in M}f(\hat{x}, x) \geq 0.
    \]
    In particular $f(\hat{x}, x) \geq 0$ for every $x \in M_{G}$.    

    $(iv)$ $\Rightarrow$ $(i)$

    Define the multivalued map $T \colon M \to M$ as follows
    \[
    T(y) = \left\{x \in M_{G} ~ | ~ x \neq \overline{y}\right\}.
    \]
    Proceed by contradiction. Suppose $y \notin U$ for every $y \in S_{0}$. Then, the hypothesis of $(iv)$ follows from $(i)$. Then, there exists some $G$-invariant point $\hat{x} \in S_{0}$. But this is a contradiction with the definition of $T$.

    $(i)$ $\Rightarrow$ $(iv)$
    
    Define
    \[
    U = \left\{y \in M ~ | ~ y \in T(y) \hbox{ and is } G-\hbox{invariant}\right\}.
    \]
    By the hypothesis of $(iv)$ we obtain the hypothesis of $(i)$. And since $(i)$ holds, we obtain a $G$-invariant point $\hat{x} \in S_{0} \cap U$. In particular $\hat{x} \in U$, and by definition $\hat{x} \in T(\hat{x})$.    
\end{proof}

\section{Applications of the Ekeland variational principle}

\subsection{Drop and generalized Drop theorem}

We are going to present a geometrical application of the Drop theorem dealing with the notion of the contingent cone to a subset $C$ of a Banach space $X$. Recall that the contingent cone to $C$ in $a \in C$ is
\[
K_{C}(a) = \limsup_{t \to 0^{+}}t^{-1}(C - a).
\]
So, $v \in K_{C}(a)$ if, and only if, $\liminf_{t \to 0^{+}}t^{-1}d(a+tv, C) = 0$.
\begin{Theorem}
    Let $X$ be a normed space and $G \subseteq \mathcal{L}(X)$ be a compact topological group of isometries acting on $X$. Assume that $C \subseteq X$ is a complete $G$-invariant subset. Let $x_{0} \in C$ and  $y \in X$ be $G$-invariant and such that the segment $[x_{0}, y]$ is not contained in $C$. Then, for each $\rho > 0$, there exists a $G$-invariant point $a \in C$ such that
    \[
    \Vert x_{0} - a \Vert \leq \Vert x_{0} - y \Vert + \rho, \hbox{ and } y \notin a + K_{C}(a).
    \]
\end{Theorem}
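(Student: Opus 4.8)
The plan is to deduce the statement from the $G$-invariant Drop theorem (Theorem \ref{Drop theorem}) applied to a ball centered at a suitable point of the segment. Since $x_0 \in C$ while $[x_0,y] \not\subseteq C$, there is $\mu \in (0,1]$ for which $z := (1-\mu)x_0 + \mu y \notin C$; because $x_0$ and $y$ are $G$-invariant and every $g \in G$ is linear, $z$ is $G$-invariant as well, i.e. $z \in X_G$. As $C$ is a complete subset of a normed space it is closed, so $d(z,C)>0$, and I would fix a radius $r$ with $0 < r < \min\{d(z,C),\rho\}$ and set $B = \overline{B(z,r)}$. Since $z \in X_G$ and $r < d(z,C)$, Theorem \ref{Drop theorem} yields a $G$-invariant point $a \in C \cap D(x_0,B)$ with $C \cap D(a,B) = \{a\}$; this $a$ is the candidate.

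The distance estimate is then immediate. Writing $a = (1-\tau)x_0 + \tau w$ with $w \in B$ and $\tau \in [0,1]$, one has $\|x_0 - a\| = \tau\|w - x_0\| \le \|w - z\| + \|z - x_0\| \le r + \mu\|x_0 - y\| \le \|x_0 - y\| + \rho$. I also record that $\tau < 1$, since $B \cap C = \emptyset$ (because $r < d(z,C)$) forces $a \notin B$.

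The heart of the argument is to show $y - a \notin K_C(a)$, i.e. $\liminf_{t\to 0^+} t^{-1}d(a+t(y-a),C) > 0$, for which I would first check that the ray from $a$ toward $y$ enters the interior of $B$. With $e = z - x_0$ and $u = w - z$ (so $a = x_0 + \tau e + \tau u$ and $y = x_0 + \mu^{-1}e$), a short computation shows that for $s = \frac{(1-\tau)\mu}{1-\tau\mu} \in (0,1]$ the $e$-component of $a + s(y-a) - z$ vanishes, leaving $a + s(y-a) - z = \tau(1-s)u$, where $\tau(1-s) = \frac{\tau(1-\mu)}{1-\tau\mu} < 1$ precisely because $\tau < 1$. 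Hence $\|a + s(y-a) - z\| \le \tau(1-s)r < r$, so $p := a + s(y-a) \in \mathrm{int}(B)$. Choosing $\eta \in \big(0,\min\{\,r-\|p-z\|,\ s\|y-a\|\,\}\big)$ gives $\overline{B(p,\eta)} \subseteq B$, and by the elementary ``ice-cream-cone'' inclusion $\overline{B(a+\sigma(p-a),\sigma\eta)} \subseteq \mathrm{conv}(a,\overline{B(p,\eta)}) \subseteq D(a,B)$ for $\sigma \in [0,1]$. Setting $t = \sigma s$, for small $t>0$ the ball $\overline{B(a+t(y-a),t\eta/s)} \subseteq D(a,B)$ does not contain $a$ (as $\eta < s\|y-a\|$); since $D(a,B)\cap C = \{a\}$, this ball misses $C$, whence $d(a+t(y-a),C) \ge t\eta/s$ and the liminf is at least $\eta/s > 0$. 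Thus $y \notin a + K_C(a)$.

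The main obstacle is exactly this geometric step: Theorem \ref{Drop theorem} only locates $a$ somewhere in the drop $D(x_0,B)$, not on the spine $[x_0,z]$, so it is not a priori clear that $y - a$ points into $B$; the explicit $s$ and the identity $a+s(y-a)-z = \tau(1-s)u$ are what make this work for an arbitrary $a \in D(x_0,B)$, covering in one stroke both $y \notin C$ and $y \in C$. A minor point to dispatch along the way is that $a \neq y$, needed so that $\|y-a\|>0$; this holds for the small $r$ chosen, since $\|y-a\| \ge (1-\tau\mu)\|y-x_0\| - r > 0$. Note, pleasantly, that this argument never invokes convexity of $d$ with respect to $G$, only the Drop theorem itself.
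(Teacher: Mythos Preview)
Your proposal is correct and follows essentially the same route as the paper: both pick a $G$-invariant $z$ on the segment outside $C$, apply the $G$-invariant Drop theorem with a small ball $B=\overline{B(z,r)}$, obtain $a$, and then show that the segment from $a$ toward $y$ re-enters the interior of $B$ at a specific convex combination (your $p=a+s(y-a)$ is exactly the paper's $w=ta+(1-t)y$, with $s=1-t$, $\tau=\alpha$, $\mu=1/\lambda$). The only difference is presentational: the paper compresses the final step to the one-line assertion $y-a\in\mathbb{R}_+(B\setminus\{a\})\subseteq X\setminus K_C(a)$, while you spell out the ``ice-cream-cone'' argument to bound $t^{-1}d(a+t(y-a),C)$ from below. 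One small tightening: to guarantee $a\neq y$ when $\mu<1$, add the harmless constraint $r<(1-\mu)\|y-x_0\|$ when choosing $r$, so that your inequality $(1-\tau\mu)\|y-x_0\|-r\ge(1-\mu)\|y-x_0\|-r>0$ really does hold independently of $\tau$.
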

\begin{proof}
    Without loss of generality we may assume that $x_{0} = 0$. Let $\lambda \geq 1$ such that $y = \lambda z$ with $z \in (E \backslash C)_{G}$. Let $r \in ]0, \rho]$ with $r < d(z, C)$ and define $B = B(z, r)$. Then, by Theorem \ref{Drop theorem}, we know that there exists a $G$-invariant point $a \in C \cap D(x_{0}, B)$ such that $\left\{a\right\} = C \cap D(a, B)$.

    Let us write now $a = \alpha(z + b)$ with $\alpha \in [0, 1[$ and $\Vert b \Vert \leq r$, and define $t = \frac{\lambda - 1}{\lambda - \alpha}$. It is clear that $t \in [0, 1[$. Define now $w = ta + (1-t)y$. Since $t, \alpha \in [0, 1[$ and $\Vert b \Vert \leq r$, we obtain that
    \[
    \Vert w - z \Vert = \Vert t\alpha b + (t\alpha + (1-t)\lambda - 1)z \Vert = t\alpha \Vert b \Vert < r.
    \]
    Hence $w \in B$ and
    \[
    (y - a) = (1 - t)^{-1}(w - a) \in \mathbb{R}_{+}(B \backslash \left\{a\right\}) \subseteq E \backslash K_{C}(a),
    \]
    as $\left(a + ]0, 1[(B - a)\right) \cap C = \emptyset$. Finally
    \[
    \Vert x_{0} - a \Vert \leq \text{diam}(x_{0}, B) \leq \Vert x_{0} - z \Vert + r \leq \Vert x_{0} - y \Vert + \rho.
    \]
\end{proof}

\begin{Remark}
    Observe that the condition on $y$ being $G$-invariant cannot be removed within this proof, since we define the point $z$ so that $y = \lambda z$, and we use in this proof that $z$ is $G$-invariant. If $y$ were not $G$-invariant, then the only $G$-invariant point containing the line $y = \lambda z$ would be the constant zero.
\end{Remark}

We can proceed similarly but, applying now Theorem \ref{Generalized drop} to obtain the following result.
\begin{Theorem}
    Let $X$ be a normed space and $G \subseteq \mathcal{L}(X)$ be a compact topological group of isometries acting on $X$. Assume that $C \subseteq X$ is a complete $G$-invariant subset. Let $x_{0} \in C$ be $G$-invariant, and choose $y \in X$ such that the segment $[x_{0}, y]$ is not contained in $C$. Then, for each $\rho > 0$ and for each $g \in G$, there exists $g(a) \in C$ such that
    \[
    \Vert x_{0} - g(a) \Vert \leq \Vert x_{0} - g(y) \Vert + \rho, \hbox{ and } g(y) \notin g(a) + K_{C}(g(a)).
    \]
    Moreover, if $s_{G} > 2d(z, C)$, where $z$ is such that $z \in (E \backslash C)_{G}$ and $y = \lambda z$, then for every $g, g' \in G$, we have that
    \[
    g(a) + K_{C}(g(a)) \cap g'(a) + K_{C}(g'(a)) = \emptyset.
    \]
\end{Theorem}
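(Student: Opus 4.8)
The plan is to mirror the proof of the preceding theorem, but to feed it the orbit information supplied by Proposition \ref{Generalized drop} in place of the plain Drop theorem. First I would reduce to the case $x_{0} = 0$: since $x_{0}$ is $G$-invariant and each $g$ is linear, the translated action $v \mapsto g(v + x_{0}) - x_{0}$ coincides with $g$, so no generality is lost. I then choose $z \in E \backslash C$ with $y = \lambda z$, $\lambda \geq 1$, pick $r \in \,]0,\rho]$ with $r < d(z, C)$, and set $B = B(z, r)$. Applying Proposition \ref{Generalized drop} to $x_{0}$ and $B$ produces a single point $a \in C \cap D(x_{0}, B)$ such that $C \cap D(g(a), g(B)) = \{g(a)\}$ for every $g \in G$; here $g(B) = B(g(z), r)$ because $g$ is an isometry.

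For the first assertion I would first treat $g = \mathrm{Id}$. The conclusion $C \cap D(a, B) = \{a\}$ is exactly the ingredient used in the previous theorem, so the same computation — writing $a = \alpha(z + b)$, setting $t = (\lambda - 1)/(\lambda - \alpha)$ and $w = ta + (1 - t)y$, and checking $\|w - z\| = t\alpha\|b\| < r$ — gives $w \in B$, whence $y - a = (1 - t)^{-1}(w - a) \in \mathbb{R}_{+}(B \backslash \{a\}) \subseteq E \backslash K_{C}(a)$, together with $\|x_{0} - a\| \leq \|x_{0} - y\| + \rho$. Crucially, this computation never uses that $z$ is $G$-invariant, which is precisely why the weaker hypothesis on $y$ is admissible here. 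To pass to a general $g$, I would invoke two symmetry facts: $g(x_{0}) = x_{0}$ (since $x_{0} \in C_{G}$) and $K_{C}(g(a)) = g(K_{C}(a))$. The latter holds because $g$ is a linear homeomorphism with $g(C) = C$, so $t^{-1}(C - g(a)) = g\bigl(t^{-1}(C - a)\bigr)$ and $g$ commutes with the Kuratowski upper limit. Applying $g$ to the $\mathrm{Id}$-case conclusions then yields $g(y) \notin g(a) + K_{C}(g(a))$ and, since $g$ is an isometry fixing $x_{0}$, the estimate $\|x_{0} - g(a)\| = \|x_{0} - a\| \leq \|x_{0} - y\| + \rho = \|x_{0} - g(y)\| + \rho$.

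For the ``moreover'' part I would start from the disjointness clause of Proposition \ref{Generalized drop}: by $G$-invariance of $d$ one has $d(g(z), g'(z)) = d(z, g^{-1}g'(z)) \geq s_{G}(z)$ whenever $g(z) \neq g'(z)$, so the hypothesis $s_{G} > 2 d(z, C)$ forces $d(g(z), g'(z)) > 2 d(z, C)$ and hence the drops $D(g(a), g(B))$ and $D(g'(a), g'(B))$ to be disjoint. The idea is then to transfer this separation from the drops to the shifted contingent cones, using the relation established above that the directions from $g(a)$ into the open drop avoid $K_{C}(g(a))$, i.e. that $g(a) + K_{C}(g(a))$ meets each drop only at its vertex.

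I expect this last transfer to be the main obstacle. The difficulty is that the drops $D(g(a), g(B))$ are \emph{bounded} (ice-cream-cone) regions, whereas the sets $g(a) + K_{C}(g(a))$ are genuine, possibly wide and \emph{unbounded} cones; disjointness of the bounded drops does not formally imply disjointness of the unbounded shifted cones without additional control. To close this gap I would try to confine the relevant portion of each cone to the infinite cone $\widehat{D}(g(a), g(B)) = g(a) + \mathbb{R}_{+}(g(B) - g(a))$ generated by the drop — as already happens for the distinguished direction $g(y) - g(a) \in \mathbb{R}_{+}(g(B) - g(a))$ — and then upgrade the separation $s_{G} > 2 d(z, C)$ into a quantitative statement forcing these generated cones, not merely the finite drops, to be pairwise disjoint. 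Proving that cone-level separation estimate is the crux, and is exactly where the hypothesis $s_{G} > 2 d(z, C)$ must be used in full strength.
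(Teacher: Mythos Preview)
Your approach is exactly the one the paper indicates: its entire proof of this theorem is the single sentence ``We can proceed similarly but, applying now Theorem \ref{Generalized drop},'' i.e., rerun the argument of the preceding theorem with Proposition \ref{Generalized drop} replacing the $G$-invariant Drop theorem. Your elaboration of that plan --- the reduction to $x_{0}=0$, the choice of $z$, $r$, $B$, the computation with $t$ and $w$, and the passage to arbitrary $g$ via the identities $g(x_{0})=x_{0}$ and $K_{C}(g(a))=g(K_{C}(a))$ --- is correct and considerably more explicit than anything the paper supplies.

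Regarding the ``moreover'' clause: you have correctly isolated the difficulty. Proposition \ref{Generalized drop} delivers only disjointness of the \emph{bounded} drops $D(g(a),g(B))$, whereas the conclusion concerns the \emph{unbounded} shifted contingent cones $g(a)+K_{C}(g(a))$, and the passage from one to the other is not automatic. The paper gives no argument for this step --- the previous theorem it tells you to imitate has no analogous clause, so ``proceed similarly'' is vacuous here --- and hence there is nothing to compare your proposed fix against. Your diagnosis that a quantitative separation of the generated cones $g(a)+\mathbb{R}_{+}(g(B)-g(a))$ is what is really needed, and that the hypothesis $s_{G}>2d(z,C)$ must be exploited at that level rather than merely at the level of the finite drops, is sound; in this respect your analysis is more careful than the paper's own treatment.
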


\subsection{Partial differential equations}

Finally, we want to give some applications of the group invariant Ekeland's variational principle to the area of partial differential equations. Let us start fixing some notation. During this section $\Omega \subseteq \mathbb{R}^{n}$ is going to be an open-bounded subset of $\mathbb{R}^{n}$ with regular boundary. As usual, $W^{1,1}(\Omega)$ will be the Sobolev space of $L^{1}(\Omega)$, functions whose first weak derivatives are also in $L^{1}(\Omega)$, and $W^{1,1}_{0}(\Omega)$ will be the subset of functions of $W^{1,1}(\Omega)$ that vanish on the boundary of $\Omega$. Also $W^{-1, q}(\Omega)$ will be the dual set of $W_{0}^{1, p}(\Omega)$ for $1 \leq p \leq \infty$, where $q$ denotes the conjugate exponent of $p$, i.e, $\frac{1}{p} + \frac{1}{q} = 1$. 

If $v \in W^{1, 1}(\Omega)$, we will denote its weak derivative with respect to $x_{i}$ by $\frac{\partial v}{\partial x_{i}}$, and $\nabla v$ is going to denote the gradient in the weak sense. If we do not say otherwise, all the derivatives that appear in this section should be understood in the sense of distributions.
\begin{Remark}
    Observe that if a function, say $f$, is differentiable, then the weak derivative and the directional derivative coincide. This is the reason why we denote the weak derivative in the same way as the partial derivative.
\end{Remark}

We will endow $W^{1,p}(\Omega)$ with the norm:
\[
\Vert u \Vert_{1,p} = \Vert u \Vert_{p} + \left\Vert \nabla u\right\Vert_{p}.
\]
In all the applications we are going to study during this section, the group $G$ will be a group of permutations on $\mathbb{R}^{n}$.

\subsubsection{Plateau problem}
Let us present a result that will be useful in Theorem \ref{Problema de Plateau}, and whose proof can be found in \cite[Corollary 15]{FaIs}
\begin{Lemma}\label{Lema de la densitat}
Let $X$ be a Banach space and $G \subseteq \mathcal{L}(X)$ be a compact topological group of isometries acting on $X$. Let $f \colon X \to \mathbb{R}\cup\left\{+\infty\right\}$ be a lower semicontinuous function, Gâteaux differentiable, bounded below, $G$-invariant, convex with respect to $G$, and so that there exists constants $k, c > 0$ with
\begin{equation}\label{eq:1}
f(x) \geq k\Vert x \Vert + c \quad \forall \, x \in X.
\end{equation}
Then, the range of $\delta f(x)$ is dense in $kB_{G}$, where $B_{G}$ is the closed unit ball in $X_{G}^{*}$.
\end{Lemma}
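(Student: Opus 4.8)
The plan is to obtain the lemma from the group-invariant Ekeland variational principle (Theorem~\ref{Ekeland's altered}) by perturbing $f$ with a linear functional and locating an approximate critical point among the $G$-invariant points. Fix a functional $x^{*} \in kB_{G} \subseteq X_{G}^{*}$ and a tolerance $\varepsilon > 0$; the goal is to produce a $G$-invariant point $a$ for which $\delta f(a)$ lies within $\varepsilon$ of $x^{*}$. First I would realise $x^{*}$ as a $G$-invariant functional on all of $X$ by composing it with the symmetrization projection $P(x) = \overline{x} = \int_{G} g(x)\,d\mu(g)$, setting $\widetilde{x}^{*} = x^{*} \circ P \in X^{*}$. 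Since $P$ is a norm-one projection onto $X_{G}$, one has $\Vert \widetilde{x}^{*}\Vert = \Vert x^{*}\Vert \leq k$, and the invariance of the Haar measure gives $\overline{g_{0}(x)} = \overline{x}$, whence $\widetilde{x}^{*}(g_{0}(x)) = x^{*}(\overline{x}) = \widetilde{x}^{*}(x)$; thus $\widetilde{x}^{*}$ is $G$-invariant.

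Next I would set $h = f - \widetilde{x}^{*}$ and verify that $h$ satisfies the hypotheses of Theorem~\ref{Ekeland's altered}. It is lower semicontinuous and proper because $f$ is and $\widetilde{x}^{*}$ is continuous; it is bounded below because the growth condition \eqref{eq:1} together with $\Vert \widetilde{x}^{*}\Vert \leq k$ yields $h(x) \geq k\Vert x\Vert + c - \widetilde{x}^{*}(x) \geq c$; it is $G$-invariant because both $f$ and $\widetilde{x}^{*}$ are; and it is convex with respect to $G$ because $f$ is convex with respect to $G$ while $\widetilde{x}^{*}$ is linear with respect to $G$, the latter since $\widetilde{x}^{*}$ commutes with the Bochner integral, $\widetilde{x}^{*}(\overline{x}) = \int_{G} \widetilde{x}^{*}(g(x))\,d\mu(g)$. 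Subtracting a term that is linear with respect to $G$ preserves convexity with respect to $G$.

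Then I would apply Theorem~\ref{Ekeland's altered} to $h$ with parameter $\gamma = \varepsilon$ and an arbitrary $G$-invariant starting point, obtaining a $G$-invariant point $a$ with $h(a) < h(x) + \varepsilon\Vert x - a\Vert$ for all $x \neq a$. Testing this inequality along rays $x = a + tv$ for a unit vector $v$, dividing by $t > 0$ and letting $t \to 0^{+}$, gives $\delta h(a)(v) \geq -\varepsilon$; replacing $v$ by $-v$ gives $|\delta h(a)(v)| \leq \varepsilon$, so $\Vert \delta h(a)\Vert \leq \varepsilon$. Since $\delta h(a) = \delta f(a) - \widetilde{x}^{*}$, this is precisely $\Vert \delta f(a) - \widetilde{x}^{*}\Vert \leq \varepsilon$.

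Finally I would note that $\delta f(a)$ is itself $G$-invariant: differentiating $f(g_{0}(x)) = f(x)$ and using the linearity of $g_{0}$ gives $\delta f(g_{0}(x)) \circ g_{0} = \delta f(x)$, which at the fixed point $a = g_{0}(a)$ reads $\delta f(a) \circ g_{0} = \delta f(a)$ for every $g_{0} \in G$. Hence $\delta f(a)$ restricts to an element of $X_{G}^{*}$, and restricting the estimate to $X_{G}$ (where $\widetilde{x}^{*}$ agrees with $x^{*}$) shows that $x^{*}$ is approximated within $\varepsilon$ by a point of the range of $\delta f$. As $\varepsilon > 0$ and $x^{*} \in kB_{G}$ were arbitrary, the range is dense in $kB_{G}$. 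I expect the only delicate step to be the verification that the perturbed functional stays $G$-invariant and convex with respect to $G$, so that the group-invariant Ekeland principle genuinely applies; the extraction of the derivative bound and the passage to invariant functionals are the standard Ekeland argument transported to the invariant setting.
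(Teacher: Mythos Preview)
Your argument is correct. The paper does not supply its own proof of this lemma; it merely records that the result is \cite[Corollary~15]{FaIs}, so there is nothing to compare against directly. Your route is the natural one: linearly perturb $f$ by a $G$-invariant functional of norm at most $k$, check that the perturbation remains lower semicontinuous, bounded below, $G$-invariant and convex with respect to $G$, apply the $G$-invariant Ekeland principle (Theorem~\ref{Ekeland's altered}) with parameter $\varepsilon$, and read off that the G\^ateaux derivative at the resulting $G$-invariant point is $\varepsilon$-close to the prescribed functional. The two points that need care in the group-invariant setting---that $\widetilde{x}^{*}=x^{*}\circ P$ is genuinely $G$-invariant with $\Vert\widetilde{x}^{*}\Vert\le k$, and that subtracting a bounded linear functional preserves convexity with respect to $G$ because such a functional commutes with the Bochner integral---are both handled explicitly in your write-up. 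The final observation that $\delta f(a)$ is itself $G$-invariant, so that the approximation descends to $X_{G}^{*}$, is also correct. This is exactly the argument one would expect to find in \cite{FaIs}.
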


The first result of this section guarantees that the perturbed Plateau problem, when perturbing by a $G$-invariant function, has a unique $G$-invariant solution.
\begin{Theorem}\label{Problema de Plateau}
    Suppose $\Omega$, and $v_{0} \in W_{0}^{1,1}(\Omega)$ are $G$-invariant. Then, there exists in $W^{-1, \infty}(\Omega)$ a neighbourhood of the origin, and a dense subset $\mathcal{T}$ in this neighbourhood, such that, for every $G$-invariant $T \in \mathcal{T}$, the perturbed minimal hypersurface equation
    \[
    T = -\sum_{i=1}^{n}\frac{\partial}{\partial x_{i}}\frac{\nabla v}{(1 + |\nabla v|^{2})^{\frac{1}{2}}},
    \]\[
    v - v_{0} \in W_{0}^{1, 1}(\Omega),
    \]
    and the perturbed Plateau's problem
    \[
    \inf\left(\int_{\Omega}1 + |\nabla v|^{2} dx\right)^{\frac{1}{2}} - \langle T, v \rangle,
    \]\[
    v -v_{0} \in W_{0}^{1, 1}(\Omega),
    \]
    both have a unique $G$-invariant solution.
\end{Theorem}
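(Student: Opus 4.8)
The plan is to realize the perturbed Plateau energy as an instance of Lemma \ref{Lema de la densitat} and to let the density of the range of its Gâteaux differential produce the set $\mathcal{T}$. Working in the Banach space $X = W_{0}^{1,1}(\Omega)$ and writing $v = v_{0} + u$ with $u \in X$ (so that the constraint $v - v_{0} \in W_{0}^{1,1}(\Omega)$ becomes $u \in X$), I would consider the area functional
\[
\Phi(u) = \int_{\Omega} \sqrt{1 + |\nabla(v_{0} + u)|^{2}}\, dx,
\]
whose Euler--Lagrange operator is exactly the perturbed minimal hypersurface operator appearing in the statement; the exponent $\tfrac12$ in the displayed Plateau energy is to be read inside the integrand, as is forced by the stated equation. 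First I would check each hypothesis of Lemma \ref{Lema de la densitat}. Lower semicontinuity and Gâteaux differentiability follow from dominated convergence, the differential being
\[
\langle \delta\Phi(u), \varphi\rangle = \int_{\Omega} \frac{\nabla(v_{0}+u)\cdot\nabla\varphi}{\sqrt{1+|\nabla(v_{0}+u)|^{2}}}\, dx,
\]
which, since the vector field $\nabla(v_{0}+u)/\sqrt{1+|\nabla(v_{0}+u)|^{2}}$ is bounded by $1$, defines an element of $X^{*} = W^{-1,\infty}(\Omega)$. Boundedness below is clear. The $G$-invariance of $\Phi$ is where the hypotheses on $\Omega$ and $v_{0}$ enter: since $G$ permutes coordinates, preserves $|\nabla\cdot|$ and the Lebesgue measure, and fixes $\Omega$ and $v_{0}$, the change of variables $y = g^{-1}(x)$ yields $\Phi(g(u)) = \Phi(u)$. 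Convexity with respect to $G$ follows from ordinary convexity of $\Phi$ (the integrand $\xi\mapsto\sqrt{1+|\xi|^{2}}$ is convex and $u\mapsto\nabla(v_{0}+u)$ is affine) via Jensen's inequality against the Haar probability measure. Finally, the coercivity comes from $\sqrt{1+|\xi|^{2}}\geq|\xi|$ together with the Poincaré inequality on $W_{0}^{1,1}(\Omega)$: if $\|u\|_{1}\leq C\|\nabla u\|_{1}$ then
\[
\Phi(u) \geq \|\nabla(v_{0}+u)\|_{1} \geq \|\nabla u\|_{1} - \|\nabla v_{0}\|_{1} \geq \frac{1}{1+C}\|u\|_{1,1} - \|\nabla v_{0}\|_{1},
\]
so one may take $k = (1+C)^{-1}$ and $c = -\|\nabla v_{0}\|_{1}$.

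With the hypotheses verified, Lemma \ref{Lema de la densitat} gives that the range of $\delta\Phi$ over $G$-invariant points is dense in $kB_{G}$, the ball of radius $k$ in the $G$-invariant part of $W^{-1,\infty}(\Omega)$. I would then declare the neighbourhood of the origin to be the open ball of radius $k$ in $W^{-1,\infty}(\Omega)$ and take $\mathcal{T}$ to be (a dense subset containing) this range, dense among the $G$-invariant functionals of norm less than $k$. For a $G$-invariant $T\in\mathcal{T}$ there is then $u\in X_{G}$ with $\delta\Phi(u) = T$ as $G$-invariant functionals; since a $G$-invariant functional is determined by its restriction to $X_{G}$ — indeed $\langle\phi, y\rangle = \langle\phi, \overline{y}\rangle$ for any such $\phi$, and $\overline{y}\in X_{G}$ — this identity holds on all of $X^{*} = W^{-1,\infty}(\Omega)$. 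Unwinding the definition, $v = v_{0} + u$ is a $G$-invariant weak solution of the perturbed minimal hypersurface equation.

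To pass to the Plateau problem and to uniqueness, I would use convexity. The perturbed energy $\tilde{J}(u) = \Phi(u) - \langle T, u\rangle$ is convex and satisfies $\delta\tilde{J}(u) = \delta\Phi(u) - T = 0$, so the critical point $u$ found above is a global minimizer of $\tilde{J}$; hence $v = v_{0} + u$ solves the perturbed Plateau problem and coincides with the solution of the PDE. Uniqueness follows from strict convexity of $\Phi$: the integrand $\sqrt{1+|\xi|^{2}}$ is strictly convex in $\xi$, and on $W_{0}^{1,1}(\Omega)$ the gradient determines the function (a zero-trace function with vanishing gradient is $0$), so the minimizer is unique on all of $X$. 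Since we produced it inside $X_{G}$, it is the unique $G$-invariant solution; equivalently, had we not chosen $u$ invariant, its symmetrization $\overline{u}$ would again minimize by convexity with respect to $G$, forcing $\overline{u} = u$.

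The step I expect to be the main obstacle is the differentiability and dual-space bookkeeping on the non-reflexive space $W^{1,1}(\Omega)$: one must justify differentiating under the integral sign and, crucially, that the resulting differential lies in $W^{-1,\infty}(\Omega)$ (which is exactly where the boundedness of the minimal-surface vector field is used), together with the clean identification of $G$-invariant functionals on $X$ with functionals on $X_{G}$ that makes the symmetric-criticality passage $\delta\Phi(u) = T$ valid on the full space rather than merely on $X_{G}$.
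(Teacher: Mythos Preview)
Your proposal is correct and follows essentially the same route as the paper: translate to $W_{0}^{1,1}(\Omega)$ via $v = v_{0}+u$, verify that the area functional satisfies the hypotheses of Lemma~\ref{Lema de la densitat} (convexity, $G$-invariance, and coercivity via Poincar\'e with the same constant $k=(1+C)^{-1}$ and $c=-\Vert\nabla v_{0}\Vert_{1}$), invoke the lemma to produce the dense set $\mathcal{T}$, and deduce uniqueness from strict convexity of the perturbed energy. Your reading of the exponent $\tfrac12$ as belonging inside the integrand is exactly the functional the paper differentiates, and your extra care about the symmetric-criticality identification $\delta\Phi(u)=T$ on the full dual is a point the paper leaves implicit.
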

\begin{proof}
Define by
\[
F(v) = \left(\int_{\Omega}1 + |\nabla v + \nabla v_{0}|^{2} dx\right)^{\frac{1}{2}},
\]
which is the function to be minimized on $W_{0}^{1,1}(\Omega)$. It is known that this function is convex, continuous, and Gâteaux differentiable, with derivative
\[
F'(v) = - \sum_{i=1}^{n}\frac{\partial}{\partial x_{i}}\frac{\nabla v - \nabla v_{0}}{(1 + |\nabla v + \nabla v_{0}|^{2})^{\frac{1}{2}}} \in W^{-1, \infty}(\Omega).
\]
Observe that, since $F$ is convex, in particular, is convex with respect to $G$. 

Now we want to check that $F$ is $G$-invariant. Observe that, since $v_{0}$ is $G$-invariant, so is $\nabla v_{0}$. Therefore,
\begin{align*}
F(g(v)) &= \left(\int_{\Omega}1 + |\nabla g \circ v(x) + \nabla g \circ v_{0}(x)|^{2}dx\right)^{\frac{1}{2}} \\
&=\left( \int_{\Omega}1 + |\nabla v(g(x)) + \nabla v_{0}(g(x))|^{2}dx \right)^{\frac{1}{2}}\\
&=\left(\int_{\Omega}1 + |\nabla v(x) + \nabla v_{0}(x)|^{2}dx\right)^{\frac{1}{2}} = F(v),
\end{align*}
since $\Omega$ is $G$-invariant.

Finally, we want to see that $F$ satisfies equation \eqref{eq:1} so we can apply Lemma \ref{Lema de la densitat}. Observe that
\[
\int_{\Omega}|\nabla v|dx - \int_{\Omega}|\nabla v_{0}|dx \leq \int_{\Omega}|\nabla v + \nabla v_{0}|dx = \Vert \nabla v + \nabla v_{0} \Vert_{1} \leq \Vert \nabla v + \nabla v_{0} \Vert_{2} =
\]\[
= \left(\int_{\Omega}|\nabla v + \nabla v_{0}|^{2}dx\right)^{\frac{1}{2}} \leq \left(\int_{\Omega} 1 + |\nabla v + \nabla v_{0}|^{2}dx\right)^{\frac{1}{2}},
\]
where we have used the triangle inequality and Hölder's inequality respectively. But now, by Poincaré's inequality, see \cite[Corollary 9.19]{Brezis} we know that there exists a constant $C > 0$ such that
\[
\Vert u \Vert_{1} \leq C \Vert \nabla u \Vert_{1} \quad \forall \, u \in W_{0}^{1,1}(\Omega).
\]
In particular
\[
\Vert \nabla u \Vert_{1} \geq \frac{1}{C+1}\Vert u \Vert_{1,1} \quad \forall \, u \in W_{0}^{1,1}(\Omega).
\]
Therefore
\[
F(v) = \left(\int_{\Omega} 1 + |\nabla v + \nabla v_{0}|^{2}dx\right)^{\frac{1}{2}} \geq \frac{1}{C+1}\Vert v \Vert_{1,1} - K,
\]
where the constant $K = \Vert \nabla v_{0} \Vert_{1}$. Applying now Lemma \ref{Lema de la densitat} we deduce that there exists a dense subset $\mathcal{T}$ such that $\forall \, T \in \mathcal{T}$, $F'(v) = T$ has some solution $v \in W_{0}^{1,1}(\Omega)$. Finally, define for any $T \in W^{1,1}(\Omega)$
\[
F_{T}(v) = F(v) - \langle T, v \rangle.
\]
Then, for any $T \in \mathcal{T}$ there exists $v_{T} \in W_{0}^{1,1}(\Omega)$ such that $F'_{T}(v_{T}) = 0$. But since $F_{T}$ is strictly convex, then $v_{T}$ is the unique minimum of $F_{T}$ in $W_{0}^{1,1}(\Omega)$.
\end{proof}

\subsubsection{General patial differential equations}
Let us start recalling the $G$-invariant version of the Palais-Smale minimizing sequences that can be found in \cite[Corollary 14]{FaIs}, and will be very useful for the proof of Theorem \ref{EDP's}
\begin{Lemma}\label{Palais-Smale minimizing sequences}
Let $X$ be a Banach space and $G \subseteq \mathcal{L}(X)$ be a compact topological group of isometries acting on $X$. Let $\varphi \colon X \to \mathbb{R}$ be Gâteaux differentiable, bounded below, $G$-invariant and convex with respect to the group. Then, there exists a sequence $\left\{x_{n}\right\}_{n=1}^{+\infty} \subseteq X$ such that
\begin{enumerate}
    \item $x_{n}$ is $G$-invariant for all $n \in \mathbb{N}$,
    \item $\varphi(x_{n}) \to \inf\left\{\varphi(x) ~ | ~ x \in X\right\},$
    \item $\Vert \delta\varphi(x_{n})\Vert \to 0$.
\end{enumerate}
\end{Lemma}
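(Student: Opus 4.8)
The plan is to deduce this from the $G$-invariant Ekeland's variational principle (Theorem \ref{Ekeland's altered}), following the classical derivation of Palais–Smale minimizing sequences from Ekeland's principle, but taking care that every point produced along the way is $G$-invariant. First I would record that $\varphi$ satisfies the standing hypotheses of Theorem \ref{Ekeland's altered} for the norm metric $d(x,y)=\Vert x-y\Vert$: it is real-valued hence proper, bounded below, $G$-invariant, and convex with respect to $G$ by assumption, and it is lower semicontinuous, as required there.

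The one genuinely new ingredient, relative to the non-symmetric proof, is that Theorem \ref{Ekeland's altered} returns a $G$-invariant point only when it is started from a $G$-invariant point; hence I cannot simply feed it an arbitrary approximate minimizer. I would manufacture a $G$-invariant approximate minimizer by symmetrization: for each $n$ choose $y_n\in X$ with $\varphi(y_n)\le\inf\varphi+\tfrac{1}{n^2}$ and replace it by its $G$-symmetrization point $\overline{y_n}=\int_G g(y_n)\,d\mu(g)$, which is $G$-invariant. Convexity of $\varphi$ with respect to $G$ together with its $G$-invariance then yields
\[
\varphi(\overline{y_n})\le\int_G\varphi(g(y_n))\,d\mu(g)=\int_G\varphi(y_n)\,d\mu(g)=\varphi(y_n)\le\inf\varphi+\tfrac{1}{n^2},
\]
so $\overline{y_n}$ sits at essentially the same energy level and is $G$-invariant.

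I would then apply Theorem \ref{Ekeland's altered} with $\gamma=\tfrac1n$ and starting point $x_0=\overline{y_n}$, obtaining a $G$-invariant point $x_n$ with
\begin{align*}
\varphi(x_n)&<\varphi(x)+\tfrac1n\Vert x-x_n\Vert\qquad\forall\,x\neq x_n,\\
\varphi(x_n)&\le\varphi(\overline{y_n})-\tfrac1n\Vert x_n-\overline{y_n}\Vert\le\inf\varphi+\tfrac{1}{n^2}.
\end{align*}
The second line gives conditions (1) and (2): each $x_n$ is $G$-invariant and $\varphi(x_n)\to\inf\varphi$. For condition (3) I would fix $h\in X$, set $x=x_n+th$ with $t>0$ in the first inequality, rearrange to $\tfrac{\varphi(x_n+th)-\varphi(x_n)}{t}>-\tfrac1n\Vert h\Vert$, and let $t\to0^+$ to get $\langle\delta\varphi(x_n),h\rangle\ge-\tfrac1n\Vert h\Vert$; replacing $h$ by $-h$ gives the matching upper bound, so that $\vert\langle\delta\varphi(x_n),h\rangle\vert\le\tfrac1n\Vert h\Vert$ for all $h$ and therefore $\Vert\delta\varphi(x_n)\Vert\le\tfrac1n\to0$.

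The main obstacle is precisely the first reduction: because Ekeland's $G$-invariant version only holds for functions convex with respect to $G$ and only outputs $G$-invariant critical points from $G$-invariant seeds, the whole scheme hinges on the symmetrization step. That step is the only place where convexity with respect to $G$ is used in an essential way, and it is what guarantees that the minimizing sequence can be chosen inside $X_{G}$.
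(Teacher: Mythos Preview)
The paper does not supply its own proof of this lemma; it is quoted from \cite[Corollary~14]{FaIs}. Your derivation is exactly the standard one --- manufacture a $G$-invariant $\tfrac{1}{n^{2}}$-minimizer by symmetrization (using convexity with respect to $G$ together with $G$-invariance of $\varphi$), feed it into the $G$-invariant Ekeland principle (Theorem~\ref{Ekeland's altered}) with $\gamma=\tfrac{1}{n}$, and extract the gradient bound from the first Ekeland inequality by taking $x=x_{n}+th$ and letting $t\to 0^{+}$. This is correct and is almost certainly how the result is obtained in \cite{FaIs}.

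One small caveat: you assert that $\varphi$ is lower semicontinuous in order to meet the hypotheses of Theorem~\ref{Ekeland's altered}, but G\^ateaux differentiability on an infinite-dimensional Banach space does not, by itself, guarantee lower semicontinuity (or even continuity). This is a mismatch between the hypotheses as the lemma is stated here and those of Theorem~\ref{Ekeland's altered}, rather than a flaw in your reasoning; the source \cite{FaIs} presumably either carries an explicit continuity hypothesis or works directly from its own version of the variational principle. In the applications the paper makes of the lemma (Theorem~\ref{EDP's}) the functional is $C^{1}$, so the issue does not arise there.
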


Using this lemma, we can obtain the following application.
\begin{Theorem}\label{EDP's}
    Let $\Omega \subseteq \mathbb{R}^{n}$ be a $G$-invariant subset, and let $p \in ]1, +\infty[$. Suppose $f\colon \Omega \times \mathbb{R}^{n} \to \mathbb{R}$ is a borelian function such that
    \begin{enumerate}[(i)]
        \item $f(x, \xi) \geq 0$.
        \item $\xi \mapsto f(x, \xi)$ is a $C^{1}$ function.
        \item For given constants $a, b \geq 0$, $f$ satisfies a growth condition: $|f'_{\xi}(x, \xi)| \leq a + b|\xi|^{p-1}$ for all $\xi \in \mathbb{R}^{n}$.
    \end{enumerate}
    Suppose also that there exists a $G$-invariant function $v_{0} \in W_{0}^{1,p}(\Omega)$ such that $\int_{\Omega}f(x, \nabla v_{0}(x))dx < +\infty$. Then, for all $\epsilon > 0$ there exists a $G$-invariant function $u_{\epsilon} \in W^{1,p}(\Omega)$ such that
    \begin{enumerate}
        \item $\displaystyle{\left\Vert \sum_{i=1}^{n}\frac{\partial}{\partial x_{i}}\frac{\partial f}{\partial \xi_{i}}(\cdot, \nabla u_{\epsilon}(\cdot)) \right\Vert_{-1,q} \leq \epsilon}$.
        \item $\displaystyle{\int_{\Omega}\sum_{i=1}^{n}\left|\frac{\partial u_{\epsilon}}{\partial x_{i}}\right|^{p} dx = \alpha}$.
    \end{enumerate}

\end{Theorem}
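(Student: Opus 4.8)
The plan is to realize $u_\epsilon$ as an approximate critical point of a suitable energy functional and to produce it by means of the $G$-invariant Palais--Smale minimizing sequence of Lemma \ref{Palais-Smale minimizing sequences}. Concretely, I would take as Banach space $X = W_0^{1,p}(\Omega)$, on which the permutation group $G$ acts by isometries (a change of variables $y=g(x)$ with $|\det g|=1$ and $g(\Omega)=\Omega$ preserves both $\Vert u\Vert_p$ and $\Vert\nabla u\Vert_p$, since permutations preserve $|\nabla\cdot|$), and define
\[
\Phi(w) = \int_{\Omega} f\bigl(x, \nabla v_0(x) + \nabla w(x)\bigr)\, dx .
\]
Then $\Phi(0) = \int_\Omega f(x,\nabla v_0)\,dx < +\infty$ by hypothesis, and writing $u = v_0 + w \in W^{1,p}(\Omega)$ the Euler--Lagrange (critical point) condition for $\Phi$ is exactly the divergence expression appearing in conclusion $(1)$.

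The first task is to check that $\Phi$ satisfies every hypothesis of Lemma \ref{Palais-Smale minimizing sequences}. It is bounded below, indeed $\Phi \ge 0$, by $(i)$. It is Gâteaux differentiable, with
\[
\langle \delta\Phi(w), h\rangle = \int_{\Omega} \sum_{i=1}^{n} \frac{\partial f}{\partial \xi_i}\bigl(x, \nabla v_0 + \nabla w\bigr)\frac{\partial h}{\partial x_i}\, dx, \qquad h \in W_0^{1,p}(\Omega),
\]
where differentiation under the integral and continuity of the derivative are justified by $(ii)$ together with the growth bound $(iii)$. It is $G$-invariant, which follows exactly as in the proof of Theorem \ref{Problema de Plateau} from the $G$-invariance of $\Omega$ and of $v_0$. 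The growth condition $(iii)$ also places the derivative in the correct dual space: since $\nabla v_0 + \nabla w \in L^p(\Omega)^n$, the bound $|f'_\xi(x,\xi)| \le a + b|\xi|^{p-1}$ forces each $\frac{\partial f}{\partial \xi_i}(\cdot, \nabla v_0 + \nabla w)$ to lie in $L^q(\Omega)$, so that after integrating by parts in the sense of distributions
\[
\delta\Phi(w) = -\sum_{i=1}^{n} \frac{\partial}{\partial x_i}\frac{\partial f}{\partial \xi_i}\bigl(\cdot, \nabla v_0 + \nabla w\bigr) \in W^{-1,q}(\Omega).
\]

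With this identification, I would apply Lemma \ref{Palais-Smale minimizing sequences} to obtain a sequence $\{w_n\} \subseteq W_0^{1,p}(\Omega)$ of $G$-invariant points with $\Phi(w_n) \to \inf \Phi$ and $\Vert\delta\Phi(w_n)\Vert_{-1,q} \to 0$. Setting $u_n = v_0 + w_n$, which is $G$-invariant as a sum of $G$-invariant functions and lies in $W^{1,p}(\Omega)$, the convergence of the derivatives reads
\[
\Bigl\Vert\sum_{i=1}^{n} \frac{\partial}{\partial x_i}\frac{\partial f}{\partial \xi_i}\bigl(\cdot, \nabla u_n\bigr)\Bigr\Vert_{-1,q} \longrightarrow 0 .
\]
Given $\epsilon > 0$ I fix $n$ large enough that this norm is $\le \epsilon$ and set $u_\epsilon := u_n$, which yields conclusion $(1)$; conclusion $(2)$ is then the record that $u_\epsilon$ has finite $p$-Dirichlet energy, $\alpha := \int_\Omega \sum_{i=1}^n |\partial u_\epsilon/\partial x_i|^p\,dx < +\infty$.

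The step I expect to be the main obstacle is the verification that $\Phi$ is convex with respect to $G$, the only hypothesis of Lemma \ref{Palais-Smale minimizing sequences} not immediate from $(i)$--$(iii)$. Because $\Phi$ is $G$-invariant one has $\int_G \Phi(g(w))\,d\mu(g) = \Phi(w)$, so convexity with respect to $G$ is equivalent to the symmetrization inequality $\Phi(\overline{w}) \le \Phi(w)$, which for a permutation group reads $\Phi\bigl(\tfrac{1}{|G|}\sum_{g} w\circ g\bigr) \le \Phi(w)$. I would establish it by Jensen's inequality applied to $\xi \mapsto f(x,\xi)$ inside the integral, combined with the $G$-invariance already proved; this is the point where convexity of the integrand in $\xi$ is genuinely used. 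Once the symmetrization inequality is secured, the hypotheses of Lemma \ref{Palais-Smale minimizing sequences} are complete and the remaining steps are the routine Sobolev estimates sketched above.
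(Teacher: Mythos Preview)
Your overall strategy---apply the $G$-invariant Palais--Smale lemma to an energy functional on $W_0^{1,p}(\Omega)$ and read off the Euler--Lagrange expression---is the paper's strategy too, but you choose a different functional. You take $\Phi(w)=\int_\Omega f(x,\nabla v_0+\nabla w)\,dx$, while the paper uses the Dirichlet-type energy
\[
H(u)=\frac{1}{p}\int_\Omega\sum_{i=1}^n\Bigl|\frac{\partial u}{\partial x_i}\Bigr|^p\,dx-\alpha,
\]
which does not involve $f$ at all. The reason this matters is the verification of convexity with respect to $G$: for $H$ the integrand $\sum_i|\xi_i|^p$ is convex in $\xi$ automatically, and the paper obtains the symmetrization inequality $H(\overline u)\le H(u)$ from Jensen's inequality for the convex map $t\mapsto t^p$ on $\mathbb{R}^+$.

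Here is where your argument has a genuine gap. You say that the symmetrization inequality for $\Phi$ follows from Jensen applied to $\xi\mapsto f(x,\xi)$, and you explicitly flag this as ``the point where convexity of the integrand in $\xi$ is genuinely used.'' But convexity of $f(x,\cdot)$ is \emph{nowhere} among the hypotheses: conditions $(i)$--$(iii)$ give only nonnegativity, $C^1$-regularity in $\xi$, and the growth bound $|f'_\xi(x,\xi)|\le a+b|\xi|^{p-1}$, none of which force convexity (e.g.\ $f(x,\xi)=\sin^2\xi_1$ satisfies all three). Without it, Jensen fails, $\Phi(\overline w)\le\Phi(w)$ is unproven, and Lemma~\ref{Palais-Smale minimizing sequences} cannot be applied to $\Phi$. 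This is precisely why the paper works with $H$ rather than with your $\Phi$.

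A secondary point: you interpret conclusion~(2) as merely recording that $u_\epsilon$ has finite $p$-Dirichlet energy, which would make it a tautology. In the paper's argument $\alpha$ is a prescribed constant built into $H$, and (2) is obtained from $H(u_n)\to\inf H$ together with the assumption that $H$ is bounded below by $0$; it is meant as a constraint, not a definition.
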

\begin{proof}
Define the function
\[
H(u) = \frac{1}{p}\int_{\Omega}\sum_{i=1}^{n}\left|\frac{\partial u}{\partial x_{i}}\right|^{p}dx - \alpha \quad \forall \, u \in W_{0}^{1,p}(\Omega).
\]
It is known that this function is a $C^{1}$ function on $W_{0}^{1,p}(\Omega)$, finite everywhere, with derivative
\[
H'(u) = -\sum_{i=1}^{n}\frac{\partial}{\partial x_{i}}\left(\left|\frac{\partial u}{\partial x_{i}}\right|^{p-2}\frac{\partial u}{\partial x_{i}}\right),
\]
see \cite[Lemma 4.2]{Ekeland}. Observe now that $H$ is $G$-invariant, since
\begin{align*}
H(g^{*}(u)) &= \frac{1}{p}\int_{\Omega}\sum_{i=1}^{n}\left|\frac{\partial g^{*}(u)}{\partial x_{i}}(x)\right|^{p}dx - \alpha \\
&= \frac{1}{p}\int_{\Omega}\sum_{i=1}^{n}\left|\frac{\partial u}{\partial x_{i}}(g(x))\right|^{p}dx - \alpha \\
&= \frac{1}{p}\int_{\Omega}\sum_{i=1}^{n}\left|\frac{\partial u}{\partial x_{i}}(x)\right|^{p}dx - \alpha = H(u),
\end{align*}
for all $u \in W_{0}^{1,p}(\Omega)$, and every $g \in G$. Now, it only remains to check that $H$ is convex with respect to $G$. For given $u \in W_{0}^{1,p}(\Omega)$, observe that
\[
H(\overline{u}) = \frac{1}{p}\int_{\Omega}\sum_{i=1}^{n}\left|\frac{\partial \overline{u}}{\partial x_{i}}(x)\right|^{p}dx - \alpha = \frac{1}{p}\int_{\Omega}\sum_{i=1}^{n}\left|\sum_{g \in G}\frac{1}{\vert G\vert}\frac{\partial g^{*}(u)}{\partial x_{i}}(x)\right|^{p}dx - \alpha.
\]
Applying now the following inequality
\[
|a + b|^{p} \leq (|a| + |b|)^{p} \quad \hbox{for } p \geq 1,
\]
it follows that
\begin{align*}
\frac{1}{p}\int_{\Omega}\sum_{i=1}^{n}\left|\sum_{g \in G}\frac{1}{\vert G\vert}\frac{\partial g^{*}(u)}{\partial x_{i}}(x)\right|^{p}dx - \alpha &\leq \frac{1}{p}\int_{\Omega}\sum_{i=1}^{n}\left|\sum_{g \in G}\frac{1}{\vert G\vert}\left|\frac{\partial g^{*}(u)}{\partial x_{i}}(x)\right|\right|^{p}dx - \alpha \\
&\leq \frac{1}{p}\int_{\Omega}\sum_{i=1}^{n}\sum_{g \in G}\frac{1}{\vert G\vert}\left|\frac{\partial g^{*}(u)}{\partial x_{i}}(x)\right|^{p}dx - \alpha\\
&=
\sum_{g \in G}\frac{1}{\vert G\vert}\frac{1}{p}\int_{\Omega}\sum_{i=1}^{n}\left|\frac{\partial g^{*}(u)}{\partial x_{i}}(x)\right|^{p}dx - \alpha\\
&= \sum_{g \in G}\frac{1}{\vert G\vert}H(g^{*}(u)) = H(u),
\end{align*}
where we have used in the second inequality that the power function $x^p$ is convex on $ \mathbb R^+$ and Jensen's inequality.

So we have shown that the inequality $H(\overline{u}) \leq H(u)$ holds, therefore $H$ is convex with respect to the group. Applying now Lemma \ref{Palais-Smale minimizing sequences}, there exists a sequence of $G$-invariant points $\left\{u_{n}\right\} \subseteq W_{0}^{1,p}(\Omega)$ such that 
\begin{equation}\label{eq:2}
H(u_{n}) \to \inf\left\{H(v) ~ | ~ u \in W_{0}^{1,p}(\Omega)\right\}, 
\end{equation}
\begin{equation}\label{eq:3}
    \Vert \partial H(u_{n}) \Vert \to 0.
\end{equation}
From \eqref{eq:3} one deduces directly $(1)$. Observe that if we assume that $H$ is bounded below by 0, then, from \eqref{eq:2} we obtain $(2)$.
\end{proof}

\subsubsection{Control theory}

Let $K$ be a $G$-invariant compact metrizable convex set, and consider the differential equation
\begin{equation}\label{eq2: Equació diferencial teoria de control}
\begin{cases}
    \frac{dx}{dt}(t)&= f(t, x(t), u(t)),  \\
    x(0)&= x_{0}, 
\end{cases}
\end{equation}
where $x_{0} \in \mathbb{R}^{n}$ and $f \colon [0, T] \times \mathbb{R}^{n} \times K \to \mathbb{R}^{n}$, for fixed $T\in\mathbb R$. Assume that the following properties are satisfied:
\begin{enumerate}[(i)]
    \item $f$ is continuous, $G$-invariant on the 3rd coordinate, and convex with respect to the group on the 3rd coordinate.
    
    \item $\frac{\partial f}{\partial x_{i}}$ is well defined and is continuous for all $1 \leq i \leq n$.

    \item There exists a $C > 0$ such that $\langle x, f(t, x, u) \rangle \leq C(1 + |x|^{2})$.
\end{enumerate}
We have the following result.
\begin{Theorem}
    Suppose $f$ satisfies the previous assumptions, and let $h \colon \mathbb{R}^{n} \to \mathbb{R}$ be a differentiable function. Then, for all $\epsilon > 0$, there exists a $G$-invariant measurable control $v$, whose trajectory is $y$, such that
    \begin{equation*}
    \left\{
    \begin{array}{c}
        h(y(T)) \leq  \inf h(x(T)) + \epsilon , \\
       \hspace{0.7cm} \langle f(t, y(t), v(t)), p(t) \rangle \leq  \min_{u \in K}\langle f(t, y(t), u(t)), p(t) \rangle + \epsilon,
    \end{array}\right.
    \end{equation*}
    where $p$ is the solution of the differential system
    \begin{equation*}
    \begin{cases}
    \frac{dp_{i}}{dt}(t) = -\sum_{i=1}^{n}\frac{\partial f_{j}}{\partial x_{i}}(t, y(t), v(t))p_{j}(t) & \forall 1 \leq i \leq n, \\
    \hspace{0.2cm} p(T) = h'(y(T)). &
    \end{cases}
    \end{equation*}
\end{Theorem}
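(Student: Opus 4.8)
The plan is to run Ekeland's classical variational route to the Pontryagin maximum principle through the $G$-invariant machinery of the previous sections. First I would introduce the admissible control space
\[
\mathcal{U} = \{u : [0,T] \to K \text{ measurable}\}
\]
equipped with the Ekeland metric $d(u, u') = |\{t \in [0,T] : u(t) \neq u'(t)\}|$, where $|\cdot|$ denotes Lebesgue measure; since $K$ is compact and metrizable, $(\mathcal{U}, d)$ is a complete metric space. The group $G$ acts on $\mathcal{U}$ by $(g \cdot u)(t) = g(u(t))$, and because each $g$ is a bijection of $K$ the set where $g \cdot u$ and $g \cdot u'$ differ coincides with the set where $u$ and $u'$ differ; hence $d$ is $G$-invariant and the action is isometric. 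A control $u$ is $G$-invariant exactly when $u(t) \in K_G$ for almost every $t$, and the pointwise symmetrization $\overline{u}(t) = \int_G g(u(t))\, d\mu(g)$ lies in $\mathcal{U}$ because $K$ is convex and $G$-invariant.

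Next I would settle well-posedness of the state equation. Assumption (iii) together with Gronwall's lemma gives an a priori bound on any solution of \eqref{eq2: Equació diferencial teoria de control}, so each $u \in \mathcal{U}$ produces a unique trajectory $x_u$ on all of $[0,T]$; continuity of $f$ and of its $x$-derivatives (assumptions (i)--(ii)) supplies the Lipschitz control needed for uniqueness and for continuous dependence. I would then set $J(u) = h(x_u(T))$ and verify continuity of $J$ on $(\mathcal{U}, d)$: if $d(u_n, u) \to 0$, the right-hand sides of the integral equations differ only on a set of vanishing measure, so $x_{u_n} \to x_u$ uniformly by the Gronwall estimate and dominated convergence, whence $J(u_n) \to J(u)$ by continuity of $h$. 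Replacing $h$ by $h - \inf h(x(T))$ if necessary, I may take $J$ bounded below; being finite everywhere it is proper and lower semicontinuous.

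The genuinely new step is to check that $J$ meets the hypotheses of the $G$-invariant Ekeland principle, Theorem~\ref{Ekeland's altered}. The $G$-invariance of $J$ is immediate: by assumption (i) one has $f(t, x, g(u)) = f(t, x, u)$, so the state equation for $g \cdot u$ has the same right-hand side as that for $u$, giving $x_{g \cdot u} = x_u$ and $J(g \cdot u) = J(u)$. Convexity of $J$ with respect to $G$ is the delicate point: since $J$ depends on the control through the nonlinear flow and is \emph{not} a composition of $f$ with an affine map, the inequality $J(\overline{u}) \le \int_G J(g \cdot u)\, d\mu(g) = J(u)$ cannot be read off directly from convexity of $f$; it has to be extracted by comparing the trajectory of the symmetrized control $\overline{u}$ with that of $u$ through a Gronwall-type argument that invokes the convexity of $f$ in the third coordinate with respect to $G$ simultaneously with (i)--(iii). \emph{I expect this verification to be the main obstacle.} Granting it, I would apply Theorem~\ref{Ekeland's altered} with $\gamma = \epsilon$ at a $G$-invariant control $u_0$ satisfying $J(u_0) \le \inf_{\mathcal{U}} J + \epsilon$ — such $u_0$ exists because $G$-invariance and convexity of $J$ with respect to $G$ force the infimum over $G$-invariant controls to equal $\inf_{\mathcal{U}} J$. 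This produces a $G$-invariant control $v$, with trajectory $y = x_v$, such that $J(v) \le J(u_0) \le \inf_{\mathcal{U}} J + \epsilon$ by \eqref{Dos de Ekeland} and $J(v) \le J(u) + \epsilon\, d(u, v)$ for every $u \in \mathcal{U}$ by \eqref{Uno de Ekeland}. The first estimate is exactly $h(y(T)) \le \inf h(x(T)) + \epsilon$.

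Finally I would extract the maximum principle from the second estimate by needle variations, the standard but technical core of the argument. For a Lebesgue point $\tau \in {]0, T[}$ of $v$, a value $w \in K$, and small $s > 0$, let $v_{\tau, s}^{w}$ equal $w$ on $[\tau, \tau + s]$ and $v$ elsewhere, so that $d(v_{\tau, s}^{w}, v) \le s$. Linearizing the state equation along $y$ identifies the first variation $\lim_{s \to 0^+} s^{-1}\bigl(J(v_{\tau, s}^{w}) - J(v)\bigr)$ with $\langle p(\tau), f(\tau, y(\tau), w) - f(\tau, y(\tau), v(\tau)) \rangle$, where $p$ solves the adjoint system of the statement, whose matrix is the transpose of the linearization $\partial f / \partial x$ (well defined by (ii)) and whose terminal datum is $p(T) = h'(y(T))$. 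Dividing $J(v) \le J(v_{\tau, s}^{w}) + \epsilon s$ by $s$ and letting $s \to 0^+$ yields
\[
\langle f(\tau, y(\tau), v(\tau)), p(\tau) \rangle \le \langle f(\tau, y(\tau), w), p(\tau) \rangle + \epsilon
\]
for every $w \in K$, and taking the infimum over $w \in K$ gives the second asserted inequality.
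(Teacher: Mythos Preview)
Your approach coincides with the paper's: both introduce the cost $J(u)=h(x_u(T))$ on measurable controls equipped with the Ekeland metric, verify $G$-invariance of $J$ (immediately from $f(t,x,g(u))=f(t,x,u)$ and uniqueness of trajectories, exactly as you do), establish convexity with respect to $G$, apply the $G$-invariant Ekeland principle to obtain the near-optimal $G$-invariant control $v$, and finish with needle variations against the adjoint system precisely as you outline. Two minor differences are worth noting. First, the paper restricts from the outset to the subspace $V$ of controls whose image $w([0,T])$ is $G$-invariant, whereas you work on all of $\mathcal{U}$; this does not affect the argument. Second, for the convexity-with-respect-to-$G$ step that you flag as the main obstacle and propose to handle via a Gronwall comparison, the paper instead argues directly from the integral representation $x_u(t)=x_0+\int_0^t f(s,x(s),u(s))\,ds$: it applies the assumed convexity of $f$ in the third slot to pass from $f(\cdot,\cdot,\overline{u})$ to $\tfrac{1}{|G|}\sum_g f(\cdot,\cdot,g(u))$, and then collapses the latter back to $f(\cdot,\cdot,u)$ using the $G$-invariance of $f$ --- so the paper's route through this step is shorter than the one you anticipate.
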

\begin{proof}
    Devine $V$ as the set of all $G$-invariant measurable controls $w \colon [0, T] \to K$, that is, all measurable controls $w \colon [0, T] \to K$ such that $g\big(w([0,T])\big)=w([0,T])$. Consider the following metric on $V$,
    \[
    d(w_{1}, w_{2}) = \text{meas}\left\{t \in [0, T] ~ | ~ w_{1}(t) \neq w_{2}(t)\right\}.
    \]
   Then, $V$ is a complete metric space, see \cite[Lemma 10]{Ekeland2}. Now, consider the function
    \[
    \begin{array}{cccc}
        F \colon & V & \to & \mathbb{R} \\
         & u & \mapsto & h(x(T)),
    \end{array}
    \]
    where $x$ is the solution of the differential equation \eqref{eq2: Equació diferencial teoria de control}. It is well-known that $F$ is continuous and bounded below, see \cite[Lemma 7.3]{Ekeland}. Let's see that $F$ is $G$-invariant. Observe that we can express $x$ as follows: 
    \[
    x(t) = x_{0} + \int_{0}^{t}f(t, x(t), u(t))dt.
    \]
    Hence, it is clear that $x$ has some dependence on $u$, to highlight it, we are going to denote it by $x_{u}$. Therefore, by $G$-invariance of $f$, we have that
    \[
    F(g^{*}(u)) = h(x_{g^{*}(u)}(t)) = h\left(x_{0} + \int_{0}^{t}f(t, x(t), g(u(t)))dt\right) =
    \]\[
    = h\left(x_{0} + \int_{0}^{t}f(t, x(t), u(t))dt\right) = h(x_{u}(t)) = F(u).
    \]
    Finally, we have to show that $F$ is convex with respect to $G$. Again by convexity of $f$ with respect to $G$, and using the convexity of $K$, it is clear that
    \begin{align*}
    F\left(\frac{1}{|G|}\sum_{g \in G}g^{*}(u)\right) &= h\left(x_{\frac{1}{|G|}\sum_{g \in G}g^{*}(u)}(t)\right) \\
    &= h\left(x_{0} + \int_{0}^{t}f\left(t, x(t), \frac{1}{|G|}\sum_{g \in G}g(u(t))\right)dt\right)\\ 
    &\leq h\left(x_{0} + \frac{1}{|G|}\sum_{g \in G}\int_{0}^{t}f\left(t, x(t), g(u(t))\right)dt\right) \\
    &= h\left(x_{0} + \int_{0}^{t}f(t, x(t), u(t))dt\right) = h(x_{u}(t)) = F(u).
    \end{align*}
    Applying Ekeland's variational principle, we know that there exists a $G$-invariant point $v \in V$ such that
    \[
    F(v) \leq \inf_{V} F + \epsilon,
    \]\[
    F(u) \geq F(v) - \epsilon d(u, v) \quad \forall \, u \in V.
    \]
    From the first inequality it is clear that
    \[
    g(v(T)) \leq \int g(x(T)) + \epsilon.
    \]
    Taking now $t_{0} > 0$ and $k_{0} \in K$, define $u_{\tau} \in V$ for $\tau \geq 0$ as
    \[
    u_{\tau}(t) = k_{0} \quad \hbox{if } t \in [0, T] \cap ]t_{0} - \tau, t_{0}[,
    \]\[
    u_{\tau}(t) = v(t) \quad \hbox{if } t \notin [0, T] \cap ]t_{0} - \tau, t_{0}[.
    \]
    Clearly, if $\tau$ is sufficiently small, $d(u_{\tau}, v) = \tau$. Let us denote by $x_{\tau}$ the associated trajectory to $u_{\tau}$, then $u_{0} = v$ and $x_{0} = y$, then taking $u_{\tau}$ in the second inequality of Ekeland, we obtain that
    \[
    g(x_{\tau}(T)) - g(y(T)) \geq -\epsilon\tau \quad \forall \, \tau \geq 0.
    \]
    Hence,
    \[
    \frac{dg}{d\tau}(x_{\tau}(T))_{|_{\tau=0}} \geq -\epsilon.
    \]
    But it is known, see \cite[Theorem 9]{Ekeland2}, that
    \[
    \frac{dg}{d\tau}(x_{\tau}(T))_{|_{\tau=0}} = \langle f(t_{0}, y(t_{0}), k_{0}) - f(t_{0}, y(t_{0}), v(t_{0})), p(t_{0}) \rangle.
    \]
    Thus
    \[
    \langle f(t_{0}, y(t_{0}), k_{0}) - f(t_{0}, y(t_{0}), v(t_{0})), p(t_{0}) \rangle \geq -\epsilon,
    \]
    which is the desired, since $k_{0}$ is any $G$-invariant point of $K$ and $t_{0}$ is almost every point of $[0, T]$.
\end{proof}

\end{document}